\colorlet{Changes@Color}{red}
\newcommand{\Z}{\mathbb Z}
\newcommand{\Q}{\mathbb Q}
\newcommand{\tsb}[1]{\textnormal{#1}}
\DeclareMathOperator{\Cyc}{Cyc}
\DeclareMathOperator{\Pic}{Pic}
\DeclareMathOperator{\Hom}{Hom}
\DeclareMathOperator{\Ext}{Ext}
\DeclareMathOperator{\DivRat}{DivRat}
\DeclareMathOperator{\DivPrinc}{DivPrinc}
\DeclareMathOperator{\Div}{Div}
\DeclareMathOperator{\codim}{codim}
\DeclareMathOperator{\rank}{rank}
\DeclareMathOperator{\Spec}{Spec}
\newcommand{\hop}{\vskip .3cm\noindent} % pour faire un saut sans "indenter"
\newtheorem{thm}{Theorem}[section] % enlever le % devant [section] pour avoir une autre numerotation...
\newtheorem{cor}[thm]{Corollary}%[section]
\newtheorem{prop}[thm]{Proposition}%[section]
\newtheorem{lem}[thm]{Lemma}%[section]
\newtheorem{defi}[thm]{Definition}%[section]
\newtheorem{rema}[thm]{Remark}%[section]
\theoremstyle{definition}
\newtheorem{exam}[thm]{Example}%[section]
\newcommand{\setword}[2]{%
  \phantomsection
  #1\def\@currentlabel{\unexpanded{#1}}\label{#2}%
}
\begin{document}

\title{Extending torsors over regular models of curves}

\author{Sara Mehidi}

\maketitle

\begin{abstract}
Let $R$ be a discrete valuation ring with field of fractions $K$ and residue field $k$ of characteristic $p>0$.
Given a commutative finite group scheme $G$ over $K$ and a smooth projective curve $C$ over $K$ with a rational point, we study the extension of pointed fppf $G$-torsors over $C$ to pointed torsors over some $R$-regular model $\mathcal{C}$ of $C$. We first study this problem in the category of log schemes: given a finite flat $R$-group scheme $\mathcal{G}$, we prove that the data of a pointed $\mathcal{G}$-log torsor over $\mathcal{C}$ is equivalent to that of a morphism $\mathcal{G}^D \to \Pic^{log}_{\mathcal{C}/R}$, where $\mathcal{G}^D$ is the Cartier dual of $\mathcal{G}$ and $\Pic^{log}_{\mathcal{C}/R}$ the log Picard functor. After that, we give a sufficient condition for such a log extension to exist, and then we compute the obstruction for the existence of an extension in the category of usual schemes. \\

In a second part, we generalize a result of Chiodo \cite{Chiodo} which gives a criterion for the $r$-torsion subgroup of the N\'eron model of $J$ to be a finite flat group scheme, and we combine it with the results of the first part. 
Finally, we give a detailed example of extension of torsors when $C$ is a hyperelliptic curve defined over $\Q$, which illustrates our techniques.
\end{abstract}
\tableofcontents

\section*{Introduction}

Throughout this paper, $R$ denotes a discrete valuation ring with field of fractions $K$ and residue field $k$ of characteristic $p >0$.  Let $f: \mathcal{X} \to \Spec(R)$ be a faithfully flat morphism of finite type, and let $f_K: X \to \Spec(K)$ be its generic fiber. Assume that we are given a finite $K$-group scheme $G$ and an fppf $G$-torsor $Y \to X$. The problem of extending the $G$-torsor $Y \to X$ consists in finding a finite and flat $R$-group scheme $\mathcal{G}$ whose generic fiber is isomorphic to $G$, and an fppf $\mathcal{G}$-torsor $\mathcal{Y} \to \mathcal{X}$ whose generic fiber is isomorphic to $Y \to X$ as a $G$-torsor. \\ 

A general solution to this problem does not exist, but this question has been investigated in various settings by Grothendieck, Tossici \cite{Tossici} and Antei \cite{Ant}, amongst others. For example, the case when $G$ is a constant group scheme of order coprime to $p$ and $\mathcal{X}\to \Spec(R)$ is smooth with geometrically connected fibers does have a solution; see \cite[X, \S 3.1 and \S 3.6]{Groth2}.\\

 A natural strategy consists in first looking for a model of $G$ that is finite and flat (if any), and then focusing on the extension of the $G$-torsor. For instance, this has been done by Tossici  in the case where $p$ divides $|G|$ \cite{Tossici}. He studied the extension of torsors under commutative finite flat group schemes over local schemes under some extra assumptions. Moreover he also studied, using the so-called effective models, the extension of $\Z/p\Z$-torsors and $\Z/p^2\Z$-torsors imposing the normality of $\mathcal{Y}$. \\ 
  
Antei and Emsalem approached the issue with a different point of view in \cite{Antei}. With the assumption that $G$ is affine (but not necessarily commutative), since an $R$-model of $G$ that is finite and flat does not always exist, they choose to work with a more general model of $G$ that is flat but only quasi-finite, and then extend the torsor over  some scheme $\mathcal{X}'$, which is obtained by modifying the special fiber of $\mathcal{X}$.  By allowing such models of $G$, they solved the problem of extending any $G$-torsor up to a modification of $\mathcal{X}$, without any assumptions on the residue characteristic. When $\mathcal{X}$ is a relative curve, this modification is obtained by performing a finite sequence of N\'eron blow-ups of $\mathcal{X}$ along closed subschemes of the special fiber. \\

Now, if $Q$ is a rational point on $X$, the data of a pointed torsor over $X$ (relative to $Q$) is equivalent to that of a torsor which is trivial when restricted to $Q$. Futhermore, we have the following bijective correspondence:
\begin{align}\label{tor-mor}
\begin{split}
 \{ \mathrm{pointed~fppf}~G\mathrm{-torsors~over}~X\mathrm{,~relative~to~}Q \} & \to \Hom(G^D, \Pic_{X/K})\\
 Y & \mapsto h_{Y}
 \end{split}
\end{align}
where $G^D$ is the Cartier dual of $G$ and $\Pic_{X/K}$ is the relative Picard functor of $X$ over $K$. In fact, this bijective correspondence also holds over $R$ and in an earlier work \cite{Ant}, Antei used it to study the problem of extending torsors. In particular, it says that the extension of torsors reduces to the extension of certain group schemes and morphisms between them. Given some quite strong assumption on the Picard functor  $\Pic_{\mathcal{X}/R}$,  Antei treated essentially the case where $\mathcal{X}$ is smooth. He proved that $G$-torsors always extend in this context \cite[Theorem 3.10]{Ant}. \\

In this paper, we shall consider the problem of extending fppf $G$-torsors over a smooth projective $K$-curve $C$, endowed with a $K$-rational point $Q$, and seek an extension over some $R$-regular model $\mathcal{C}$ of $C$.
We first emphasize that the existence of an extension of a given $G$-torsor is, in general, a strong requirement: if we assume that we have a finite flat model $\mathcal{G}$ of $G$, that is in addition \'etale, our extended torsor --if it exists-- should be unramified. But this is quite a strong condition; in order to relax this condition, we shall work inside a larger category, namely the category of \textit{logarithmic} torsors. More precisely, we endow $\mathcal{C}$  with the logarithmic structure induced by its special fiber $\mathcal{C}_k$, seen as a divisor. Then logarithmic torsors over $\mathcal{C}$ are, roughly speaking, tamely ramified along $\mathcal{C}_k$. \\

In view of \cite{Ant} discussed above, the first natural question  is to investigate the correspondence between torsors and homomorphisms of group schemes in the logarithmic setting. Then we prove the following:\\

\textbf{Theorem \ref{raynaudlog}.} \textsl{Let $C$ be a smooth projective and geometrically connected curve over $K$, endowed with a rational point $Q$. Let $f: \mathcal{C} \to \Spec(R)$ be a regular model of $C$. Let $\mathcal{G}$ be a commutative finite flat group scheme over $R$ and let $\mathcal{G}^D$ denote its Cartier dual. We have a canonical isomorphism:
 $$R^1_{klf}f_*(\mathcal{G}_{\mathcal{C}}^D) \xrightarrow{\simeq} \underline{\Hom}(\mathcal{G}, \Pic^{log}_{\mathcal{C}/R}) $$
 where $\Pic^{log}_{\mathcal{C}/R}$ is the logarithmic Picard functor (cf. Definition \ref{Piclog})}\hop \hop
 The global sections of the sheaf on the left can be interpreted as $\mathcal{G}^D_{\mathcal{C}}$-pointed log torsors over $\mathcal{C}$ (cf. Corollary \ref{pointedlog}).
 Hence, this shows that extending torsors into log torsors over $\mathcal{C}$ also reduces to the extension of some group functors and morphisms between them. However, this criterion is not easy to handle in practice; e.g. we do not have any general result about the representability of the log Picard functor. However, we can deduce from it the following sufficient condition for extension to log torsors:\\

\textbf{Corollary \ref{G^D -> J}.} \textsl{Let $G$ be a commutative finite $K$-group scheme and let $Y \to C$ be an fppf pointed $G$-torsor (relative to $Q$). Let $J$ be the Jacobian of the curve $C$. If the morphism $h_{Y}: G^D \to J$ extends into a morphism $\mathcal{G}^D \to \mathcal{J}$, where $\mathcal{J}$ is the Néron model of $J$ and $\mathcal{G}$ a finite flat $R$-group scheme with generic fiber $G$, then the $G$-torsor $Y \to C$ extends uniquely into a logarithmic pointed $\mathcal{G}$-torsor over $\mathcal{C}$.}\\

We prove that the closed immersion $J \hookrightarrow \Pic_{C/K}$ extends into a morphism $\mathcal{J} \to \Pic^{log}_{\mathcal{C}/R}$, and we call Néron-log torsors all the log torsors such that the associated morphism of Theorem \ref{raynaudlog} factors through $\mathcal{J}$. We give a description for these torsors in terms of the Poincaré log extension of $\mathcal{J}_{\mathcal{C}}$ by $\mathbb{G}_{m,\mathcal{C}}$. In addition, we compute the obstruction for a Néron-log torsor to be fppf, and hence the obstruction for the generic torsor to extend into an fppf one in this context. We will see that this obstruction can be expressed using the obstruction for the Poincaré extension of $J_C$ by $\mathbb{G}_{m,C}$ to extend into an fppf extension of $\mathcal{J}_{\mathcal{C}}$ by $\mathbb{G}_{m,\mathcal{C}}$. \\

In a second part,  we generalize a result by Chiodo \cite[Propositions 7.4.1 and 7.5.1]{Chiodo} which provides a criterion for the existence of a finite \'etale $R$-model of $J[r]$ when $r$ is prime to $p$. In fact, his result is a finiteness criterion for $\mathcal{J}[r]$ because when $r$ is prime to $p$, $\mathcal{J}[r]$ is \'etale, hence is the natural candidate for being the N\'eron model of $J[r]$. This no longer holds when $p$ divides $r$; nevertheless, we show that the same finiteness criterion for $\mathcal{J}[r]$ holds. This yields interesting examples of commutative group schemes admitting a finite flat model that maps into $\mathcal{J}$. Applying our previous results to this setting, we obtain the following:\\

 \noindent\textbf{Corollary \ref{coro}.}
\textsl{Let $C$ be a smooth projective  geometrically connected curve over $K$ of genus $g \geq 2$, with a rational point, and let $\mathcal{C}$ be a regular model of $C$ over $R$. Let $G$ be a commutative finite $K$-group scheme killed by $r$, and let $Y \rightarrow C$ be a pointed  fppf $G$-torsor such that $Y$ is connected. If $C$ is semistable and if Chiodo's criterion is satisfied (Theorem~\ref{prop:CC}), then $G$ has a finite flat $R$-model $\mathcal{G}$ for which $Y \rightarrow C$ admits a unique extension into a pointed logarithmic $\mathcal{G}$-torsor over $\mathcal{C}$.}\\

Finally, the last part of the paper is devoted to the study of an example of extension of torsors. We give ourselves a hyperelliptic curve over $\Q$ whose Jacobian contains a subgroup isomorphic to $(\Z/p\Z)^2$. Hence this gives a $\mu_{p}^2$ -torsor on the curve. In the case where $p=3$, we first construct a regular model of the curve above $\Z_3$. Then, we will ask if the previous torsor extends to this model, and this is achieved by studying the (unique) extension of the morphism $(\Z/3\Z)^2 \to J$ into a morphism $(\Z/3\Z)^2 \to \mathcal{J}$. \\

 This paper is divided into four sections. In section \ref{section1}, we recall some facts about log schemes and log torsors. Then, we prove Theorem \ref{raynaudlog} and Corollary \ref{G^D -> J} stated above. In section \ref{section2}, we compute the obstruction for an fppf torsor that extends into a log torsor (under the assumptions of Corollary \ref{G^D -> J}) to extend into an fppf torsor. Section \ref{section3} is devoted to the generalization of the result of Chiodo stated before, and the application of our main theorems to get Corollary \ref{coro}. Finally, in section \ref{section4} we study examples of extension of torsors over a given hyperelliptic curve.  \\
 
 Throughout this article, all schemes and log schemes are assumed to be locally noetherian.\\

 \textbf{Acknowledgements}.
The author would like to warmly thank her thesis advisors Jean Gillibert and Dajano Tossici for their support and encouragements, and the long hours of discussion they devoted to her, without which this article would not have been completed. She would also like to thank her colleague William Dallaporta with whom working on the construction of regular models was very enriching. Finally, the author would like to warmly thank the referee for their careful reading of the paper and their very interesting comments and suggestions which helped to improve the paper.

\section{Extension of torsors into log and fppf torsors}\label{section1}
\subsection{Logarithmic schemes and logarithmic torsors}

We start this section by recalling some definitions about logarithmic schemes and the way they are used through this paper (we often write \textit{log} instead of logarithmic for simplification). For a detailed introduction to log schemes, we refer the reader to \cite{illusie}.

Monoids are assumed here to be commutative with a unit element.
The group  of fractions of a monoid $P$ is denoted $P^{gp}$. We call a monoid $P$ integral if the canonical morphism $P \to P^{gp}$ is injective. We say that a monoid is fine if it is integral and of finite type. We say that a monoid $P$ is saturated if it is integral and satisfies the condition that for any $a \in P^{gp}$, if there exists $n \geq 1$ such that  $a^n \in P$, then $a \in P$. \\

A \textbf{pre-logarithmic} structure on a scheme $X$ is a sheaf of monoids $M$ on the étale site $X_{\textrm{é}t}$, endowed with a homomorphism of sheaves of monoids
$$ \alpha: M \to \mathcal{O}_X$$
where $\mathcal{O}_X$ is regarded as a monoid for the multiplicative law. A pre-log structure $M$ is called a \textbf{log} structure if $\alpha$ induces an isomorphism
$$ \alpha^{-1}(\mathcal{O}_X^{\times}) \xrightarrow{\ \simeq\ } \mathcal{O}_X^{\times}. $$

A log scheme is a scheme endowed with a log structure.  For a log scheme $X$, we shall denote the log structure of $X$ by $M_X$. A morphism of log schemes is defined in a natural way. \\

If $X$ is a scheme, the inclusion $\mathcal{O}_X^{*} \subset \mathcal{O}_X$ defines a log structure over $X$ that is called the trivial log structure. Therefore, the category of schemes can be identified with a full subcategory of the category of log schemes. More precisely, the functor of inclusion $X \to (X, \mathcal{O}_X^{*} \subset \mathcal{O}_X)$ is the right adjoint of the forgetful functor $(Y, M_Y \to \mathcal{O}_Y) \to Y$ from the category of log schemes to that of schemes. If $X$ is a log scheme, the largest Zariski open subset of $X$ (possibly empty) on which the log structure is trivial is called the \textbf{open of triviality} of $X$. \\

 We say a log scheme $X$ is fine (resp. fine and saturated) if the following condition is satisfied: Zariski locally on $X$, there exists a fine
(resp. a fine and saturated) monoid $P$ and a homomorphism $\alpha: P \to \mathcal{O}_X$ such that $M_X$ is isomorphic to
the log structure associated to the constant sheaf $P$ on $X$ regarded as a pre-log structure
with respect to $\alpha$.

\noindent Now, we recall the definition of a log structure defined by a divisor. This is the main example of log structure which we will consider in this article:

\begin{exam} \label{logdiv}
Let $X$ be a noetherian and regular scheme, and let $j: U \to X$ be an open subset whose complementary is a divisor $D$ over $X$. Then the inclusion 
$$ \mathcal{O}_X \cap j_{*}\mathcal{O}_U^{*} \to \mathcal{O}_X$$
defines a fine and saturated log structure on $X$, which we call the log structure defined by $D$. It is clear from the definition that $U$ is the open of triviality of this log structure.\\
For example, $\Spec(R)$ can be seen as a log scheme with the log structure induced by $\Spec(k)$, seen as a divisor. It is called the canonical log structure on $\Spec(R)$.
\end{exam}

In this paper, we shall endow the category of fine and saturated log schemes with the \textbf{Kummer log flat} topology (denoted sometimes by klf to simplify). We refer to \cite{kato} or \cite[\S 2.2]{Gill2} for the definition of this Grothendieck topology. A torsor defined with respect to this topology will be called a \textbf{logarithmic torsor} (or a log torsor). If $X$ is a log scheme, $G$ a group scheme over $X$, we denote by $H_{klf}^1(X,G)$ the first cohomology group that classifies $G$-logarithmic torsors over $X$. Log torsors in this paper are defined with respect to this topology. Moreover, a Kummer log flat cover of a scheme endowed with the trivial log structure is just a cover for the fppf topology. So, in this paper, the category of schemes is endowed with the fppf topology.

\begin{exam}($\mathbb{G}_m$-log torsors) \label{exam:logdiv} \\
If $X$ is a regular scheme, we write $\Div(X)$ for the group of Cartier divisors over $X$ and $\DivPrinc(X)$ for the subgroup of principal divisors. We recall that $H_{fppf}^1(X,\mathbb{G}_m)$ is isomorphic to \\ $\Div(X)/ \DivPrinc(X)$.  Now, if we assume that $X$ is endowed with a logarithmic structure induced by a divisor $D$ over $X$, then one has a similar description for $H_{klf}^1(X,\mathbb{G}_m)$. Indeed, let us call \textbf{group of divisors with rational coefficients over $D$}, and denote by $\DivRat(X,D)$ the subgroup of $\Div(X) \otimes_{\Z} \Q$ formed by the divisors over $X$ whose restriction to $U$ has integral coefficients. Then we have a canonical isomorphism
$$
\DivRat(X, D)/ \DivPrinc(X) \xrightarrow{\ \simeq\ } \ H^1_{klf}(X, \mathbb{G}_m)
$$
(see \cite[Theorem 3.1.3]{Gill1}).
\end{exam}

\subsection{Description of torsors via the Picard functor}
\subsubsection{The classical case}
If $\mathcal{G}$ is any commutative finite flat group scheme over some base $S$, $\mathcal{G}^D$ will denote its Cartier dual, namely, $\mathcal{G}^D:= \Hom_S(\mathcal{G},\mathbb{G}_{m,S})$.\\
The following isomorphism is due to Raynaud:
\begin{thm}\cite[Proposition 6.2.1]{Raynaud}\label{raynaud}
Let $f:\mathcal{X} \to S$ be a proper flat morphism of finite type, $\mathcal{G}$ an $S$-group scheme which is commutative finite and flat. Assume that $f_*\mathcal{O}_{\mathcal{X}}=\mathcal{O}_{S}$. Then we have a canonical isomorphism 
$$R^1_{fppf}f_*\mathcal{G}_{\mathcal{X}}^D \xrightarrow{\simeq} \underline{\Hom}(\mathcal{G}, \Pic_{\mathcal{X}/S}) $$
where $\Pic_{\mathcal{X}/S}$ is the relative Picard functor of $\mathcal{X}$ over $S$.
\end{thm}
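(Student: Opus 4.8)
The plan is to reduce the statement to a comparison of two spectral sequences attached to a single object of the derived category, using Cartier duality to rewrite the left-hand side as a pushforward of an internal Hom sheaf. First I would record that, by definition of the Cartier dual, $\mathcal{G}^D_{\mathcal{X}} = \underline{\Hom}_{\mathcal{X}}(\mathcal{G}_{\mathcal{X}}, \mathbb{G}_{m,\mathcal{X}})$ as sheaves on the fppf site of $\mathcal{X}$, and that $\mathcal{G}_{\mathcal{X}} = f^*\mathcal{G}$ since $\mathcal{G}$ comes from the base. The formal backbone of the argument is then the derived adjunction isomorphism
\[ Rf_*\, R\underline{\Hom}_{\mathcal{X}}(f^*\mathcal{G}, \mathbb{G}_m) \;\cong\; R\underline{\Hom}_R(\mathcal{G}, Rf_*\mathbb{G}_m), \]
valid in the derived category of fppf abelian sheaves because $\mathcal{G}$ is flat, so that $Lf^*\mathcal{G} = f^*\mathcal{G}$, and because $f_*$ has the exact left adjoint $f^*$.

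Reading off the two hypercohomology spectral sequences of the two sides of this isomorphism produces two spectral sequences converging to the cohomology $H^{\bullet}$ of the common complex:
\[ {}'E_2^{p,q} = R^p f_*\,\underline{\Ext}^q_{\mathcal{X}}(\mathcal{G}_{\mathcal{X}}, \mathbb{G}_m) \;\Rightarrow\; H^{p+q}, \qquad {}''E_2^{p,q} = \underline{\Ext}^p_R(\mathcal{G}, R^q f_* \mathbb{G}_m) \;\Rightarrow\; H^{p+q}. \]
The crucial structural input I would invoke is the vanishing $\underline{\Ext}^q(\mathcal{G}, \mathbb{G}_m) = 0$ for every $q \geq 1$, over both $\mathcal{X}$ and $R$; this expresses the fact that $\mathbb{G}_m$ is acyclic for the internal Ext functors out of a finite flat commutative group scheme, and it is what makes $\mathbb{G}_m$ special here. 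Granting it, the first spectral sequence collapses onto its bottom row $q=0$, yielding $H^n \cong R^n f_* \mathcal{G}^D_{\mathcal{X}}$, and in particular $H^1 \cong R^1 f_* \mathcal{G}^D_{\mathcal{X}}$.

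It remains to evaluate the second spectral sequence in degree one. I would use the hypothesis $f_*\mathcal{O}_{\mathcal{X}} = \mathcal{O}_R$, which holds universally (being stable under base change here), to identify $R^0 f_*\mathbb{G}_m = f_*\mathbb{G}_{m,\mathcal{X}} = \mathbb{G}_{m,R}$, and the very definition of the relative Picard functor to identify $R^1 f_*\mathbb{G}_m = \Pic_{\mathcal{X}/R}$. The bottom row ${}''E_2^{p,0} = \underline{\Ext}^p_R(\mathcal{G}, \mathbb{G}_m)$ then vanishes for all $p \geq 1$ by the same structural fact, so the low-degree terms of the second spectral sequence degenerate into an isomorphism $H^1 \cong {}''E_2^{0,1} = \underline{\Hom}(\mathcal{G}, \Pic_{\mathcal{X}/R})$. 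Composing the two identifications of $H^1$ gives the desired isomorphism, its canonicity being inherited from the edge morphisms of the spectral sequences.

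The step I expect to be the main obstacle is precisely the vanishing $\underline{\Ext}^q(\mathcal{G}, \mathbb{G}_m) = 0$ for $q \geq 1$: once this is available, everything else is formal homological algebra. I would prove it by working fppf-locally on $\Spec(R)$ and appealing to the local structure theory of finite flat commutative group schemes, reducing to the étale, multiplicative and local-local cases, where the relevant Ext sheaves against $\mathbb{G}_m$ can be computed directly; alternatively one cites the corresponding result in the literature on Cartier duality. A secondary point to verify is that $f_*$ carries injectives to injectives (so that the derived adjunction is legitimate) and that the two edge maps genuinely compose to a morphism of sheaves independent of choices, so that the resulting isomorphism is canonical and functorial in $\mathcal{G}$.
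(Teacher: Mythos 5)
Your overall strategy — the two Grothendieck spectral sequences (equivalently, the derived adjunction) attached to the composite functor $f_*\circ\underline{\Hom}(f^*\mathcal{G},-)=\underline{\Hom}(\mathcal{G},-)\circ f_*$ evaluated at $\mathbb{G}_m$ — is exactly Raynaud's argument, which the paper reproduces in the logarithmic setting in the proof of Theorem \ref{raynaudlog}. But the step you yourself single out as the main obstacle, namely the vanishing $\underline{\Ext}^q(\mathcal{G},\mathbb{G}_m)=0$ for \emph{every} $q\geq 1$, is false, and the failure occurs precisely in the degree you need. The vanishing of $\underline{\Ext}^1(\mathcal{G},\mathbb{G}_m)$ for $\mathcal{G}$ finite locally free is a genuine theorem (it is the input the paper cites, in its Kummer-log-flat incarnation, as \cite[Theorem 4.1]{Gill2}), but the analogous statement for $\underline{\Ext}^2$ fails in residue characteristic $p>0$: Breen exhibited nonzero classes in $\underline{\Ext}^2_{fppf}(\Z/p\Z,\mathbb{G}_m)$ over bases of characteristic $p$, and here $\mathrm{char}(k)=p>0$ by standing hypothesis. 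Your proposed dévissage into étale, multiplicative and local-local pieces would not rescue this, since the statement is already false for $\Z/p\Z$. (For the first spectral sequence and for the term ${}''E_2^{1,0}$ only the degree-one vanishing is used, so those parts of your argument are fine.)

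The consequence is that the five-term exact sequence of your second spectral sequence only yields
$$0 \to R^1_{fppf}f_*(\mathcal{G}_{\mathcal{X}}^D) \to \underline{\Hom}(\mathcal{G},\Pic_{\mathcal{X}/R}) \to \underline{\Ext}^2(\mathcal{G},\mathbb{G}_{m,R}),$$
i.e.\ injectivity of the canonical map but not surjectivity. To finish, one must show that the $d_2$-differential out of $\underline{\Hom}(\mathcal{G},\Pic_{\mathcal{X}/R})$ vanishes, equivalently that the edge map $\underline{\Ext}^2(\mathcal{G},\mathbb{G}_{m,R})\to R^2H(\mathbb{G}_m)$ is injective. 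That is the actual content of Raynaud's proof, and it is exactly the point the paper addresses in the proof of Theorem \ref{raynaudlog}, where the injectivity of the map called $\gamma$ there is obtained using the existence of a section of $f$ (a retraction argument). Your write-up is missing this idea entirely, having replaced it by a vanishing claim that does not hold; supplying the injectivity of this edge map is the piece you would need to add to make the proof complete.
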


\begin{lem}\label{R^1f_*}
We keep the assumptions of Theorem \ref{raynaud} and we assume in addition that $f$ has a section. We have an isomorphism: 
$$H^0(S,R_{fppf}^1f_*\mathcal{G}_{\mathcal{X}}^D) \simeq H_{fppf}^1(\mathcal{X},\mathcal{G}^D)/H_{fppf}^1(S,\mathcal{G}^D). $$
\end{lem}

\begin{proof}
Let us write the Leray sequence associated to $f$ and $\mathcal{G}_{\mathcal{X}}^D$:

\begin{small}
$0 \to H^1_{fppf}(S, f_*\mathcal{G}^D_{\mathcal{X}}) \to H^1_{fppf}(\mathcal{X},\mathcal{G}^D) \to H^0(S,R^1_{fppf}f_*\mathcal{G}^D_{\mathcal{X}}) \to H^2_{fppf}(S, f_*\mathcal{G}^D_{\mathcal{X}}) \xrightarrow{\delta} H^2_{fppf}(\mathcal{X},\mathcal{G}^D).$
\end{small}
\newline
 We have that 
 \begin{align*}
 f_* \underline{\Hom}(\mathcal{G}_{\mathcal{X}},\mathbb{G}_{m,\mathcal{X}}) &= f_*\underline{\Hom}(f^*\mathcal{G},\mathbb{G}_{m,\mathcal{X}})\\
 &=\underline{\Hom}(\mathcal{G},f_*\mathbb{G}_{m,\mathcal{X}})\\
 &=\underline{\Hom}(\mathcal{G},\mathbb{G}_{m,S})
 \end{align*}
 where the last equality follows from the assumption $f_*\mathcal{O}_{\mathcal{X}}=\mathcal{O}_{S}$. Hence, $f_*\mathcal{G}_{\mathcal{X}}^D= \mathcal{G}^D$.\\
In addition, $\delta$ is injective since $f$ has a section, and the exact sequence above becomes:
 
\begin{equation*}
0 \to H^1_{fppf}(S, \mathcal{G}^D) \to H^1_{fppf}(\mathcal{X},\mathcal{G}^D) \to H^0(S,R^1_{fppf}f_*\mathcal{G}^D_{\mathcal{X}}) \to 0.
\end{equation*}
 \end{proof}

\begin{cor}\label{Pointed}
Let $f:\mathcal{X} \to S$ be a proper flat morphism of finite type with a section, $\mathcal{G}$ a commutative finite flat $S$-group scheme. Assume that $f_*\mathcal{O}_{\mathcal{X}}=\mathcal{O}_S$. Then we have a canonical isomorphism 
$$H_{fppf}^1(\mathcal{X},\mathcal{G}^D)/H_{fppf}^1(S,\mathcal{G}^D) \xrightarrow{\simeq} {\Hom}(\mathcal{G}, \Pic_{\mathcal{X}/S}). $$
Moreover, if $\mathcal{X}$ has relative dimension $1$, then
$${\Hom}(\mathcal{G}, \Pic_{\mathcal{X}/S}) = {\Hom}(\mathcal{G}, \Pic^0_{\mathcal{X}/S}) $$

 where $\Pic^0_{\mathcal{X}/S} \subset \Pic_{\mathcal{X}/S}$ is the identity component.
\end{cor}

\begin{proof}
The first part is deduced from Theorem \ref{raynaud} and Lemma \ref{R^1f_*}.\\
As for the second part, since $\mathcal{G}$ is finite, it is torsion, hence the morphism $\mathcal{G} \to \Pic_{\mathcal{X}/S}$ factors through the functor $\Pic^{\tau}_{\mathcal{X}/S}$, where  $\Pic^{\tau}_{\mathcal{X}/S}:= \cup_{n} n^{-1} (\Pic^0_{\mathcal{X}/S})$, with $n$ being the multiplication in $\Pic_{\mathcal{X}/S}$. Finally, as noted in \cite[\S 8.0]{Raynaud}, in the case of relative curves,  $\Pic^{\tau}_{\mathcal{X}/S}=\Pic^0_{\mathcal{X}/S}$.
\end{proof}

\begin{rema}\label{pointedtor}
 \normalfont
Let $f:\mathcal{X} \to S$ be a proper flat morphism of finite type, endowed with a section which is given by some $S$-point $\mathcal{Q}$. Let $\mathcal{G}$ be a commutative finite flat $S$-group scheme. We call a \textbf{pointed} fppf $\mathcal{G}$-torsor over $\mathcal{X}$ relative to $\mathcal{Q}$ a couple $(g: \mathcal{Y} \to \mathcal{X}, \mathcal{P})$ where $\mathcal{Y}$ is an fppf $\mathcal{G}$-torsor over $\mathcal{C}$, and $\mathcal{P}$ an $S$-section over $\mathcal{Y}$ whose image by $g$ is $\mathcal{Q}$. Equivalently, an fppf $\mathcal{G}$-torsor $f: \mathcal{Y} \to \mathcal{X}$ is pointed (relative to $\mathcal{Q}$) if its restriction to $\mathcal{Q}$ is trivial.

We denote by $H^1_{fppf}(\mathcal{X},\mathcal{Q},\mathcal{G})$ the cohomology group that classifies isomorphism classes of pointed fppf $\mathcal{G}$-torsors over $\mathcal{X}$ (relative to $\mathcal{Q}$). We have an exact sequence
\[
\begin{tikzcd}
0 \to H^1_{fppf}(\mathcal{X},\mathcal{Q},\mathcal{G}) \arrow{r} & H^1_{fppf}(\mathcal{X},\mathcal{G}) \arrow{r}{\mathcal{Q}^*} & H^1_{fppf}(S,\mathcal{G}) \to 0
\end{tikzcd}
\]
where the exactness on the left is by definition of $H^1_{fppf}(\mathcal{X},\mathcal{Q},\mathcal{G})$,  and the exactness on the right is because $\mathcal{Q}^*\circ b=\mathrm{id}$, where $b:H^1_{fppf}(S,\mathcal{G}) \to H^1_{fppf}(\mathcal{X},\mathcal{G})$ is the base-change map. For the same reason, $b$ is injective and the exact sequence above splits, yielding an isomorphism
\begin{equation}\label{Pointcouple}
  H^1_{fppf}(\mathcal{X},\mathcal{Q},\mathcal{G})    \simeq  H^1_{fppf}(\mathcal{X},\mathcal{G})/H^1_{fppf}(S,\mathcal{G}).
\end{equation}
Hence, if we assume moreover that $f_*\mathcal{O}_{\mathcal{X}}=\mathcal{O}_S$, we deduce from Corollary \ref{Pointed} the isomorphism 
  \begin{equation}\label{PointedGPic}
  H^1_{fppf}(\mathcal{X},\mathcal{Q},\mathcal{G}^D)\simeq \Hom(\mathcal{G}, \Pic_{\mathcal{X}/S}).
  \end{equation}
In particular, we recover the bijective correspondence mentioned in (\ref{tor-mor}) in the case where $S=\Spec(K)$.
 \end{rema}

\subsubsection{The logarithmic case.}
Raynaud's proof of Theorem \ref{raynaud} relies on the interpretation of the relative Picard functor as the fppf sheaf $R^1_{fppf}f_*\mathbb{G}_m$; we will see that in the case where $S=\Spec(R)$, it admits an analog in the log setting. The log Picard functor has first been introduced by Kajiwara in \cite{Kajiwara} for log curves without self intersection over a field. A relative version of this functor for semistable families with fibers of any dimension has been studied by Olsson in \cite{Olsson}. For the log structure, he used what he calls the special log structure that is related to the semistability. In \cite{Alb}, Bellardini provided a comparison map between the relative log Picard functor defined using the special log structure, and the one defined using the canonical log structure on $R$ (cf. Example \ref{logdiv}). When $\mathcal{X} \to \Spec(R)$ is a semistable curve endowed with the divisorial log structure, he proved that the log Picard functor of $\mathcal{X}$ coincides with the Néron model of $\Pic_{\mathcal{X}_K/K}$.  In this paper, we define the log Picard functor using the canonical log structure of $R$, in the same way it is defined in \cite{Alb}.\\

In this section, $C$ denotes a smooth projective and geometrically connected curve over $K$, with a rational point $Q$. Let $f: \mathcal{C} \to \Spec(R)$ denote a regular model of $C$, i.e. an integral projective flat and regular $R$-scheme, with generic fiber $C$. The existence of such models is proved in \cite[ \S 8.3.4 , Corollary 3.51]{Liu}. We view $\mathcal{C}$ as a log scheme via the log structure induced by its special fiber seen as a divisor (cf. Example \ref{logdiv}). 
Finally, given the properness of $\mathcal{C}$, the $K$-section $Q$ extends uniquely into an $R$-section $\mathcal{Q}$ over $\mathcal{C}$, that we may see as a log section.

\begin{lem}\label{f_*}
$$f_*M^{gp}_{\mathcal{C}}=M^{gp}_{R}. $$
\end{lem}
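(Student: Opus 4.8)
The plan is to verify the claimed equality of sheaves on the small Zariski site of $\Spec(R)$ stalk by stalk, i.e. at the generic point $\eta$ and the closed point $s$; since the only open neighbourhood of $s$ is $\Spec(R)$ itself, this amounts to comparing global sections over $\Spec(R)$ (the stalk at $s$) with sections over $\Spec(K)$ (the stalk at $\eta$). First I would produce the natural comparison morphism $M^{gp}_R \to f_* M^{gp}_{\mathcal{C}}$: because $\pi$ is regular on $\mathcal{C}$ and invertible on the generic fibre $C$ (its divisor $\mathrm{div}(\pi)=\mathcal{C}_k$ being vertical), the element $\pi$ lies in $M_{\mathcal{C}}(\mathcal{C})$, which yields a morphism $f^{-1}M_R \to M_{\mathcal{C}}$ and hence, after groupification and adjunction, the desired map. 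It then remains to see that this map is an isomorphism on both stalks.

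Over $\Spec(K)$ this is immediate: the log structure on $C=\mathcal{C}_K$ is trivial, so $M^{gp}_C=\mathcal{O}_C^*$, and since $C$ is smooth, projective and geometrically connected, hence geometrically integral, one has $H^0(C,\mathcal{O}_C^*)=K^*$; on the other side the restriction of $M_R$ to $\eta$ is the trivial log structure on $\Spec(K)$, so that $M^{gp}_{R,\eta}=K^*$, and the comparison map is visibly the identity. Thus the stalk at $\eta$ poses no difficulty.

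The substance is the stalk at $s$, i.e. the identification $H^0(\mathcal{C}, M^{gp}_{\mathcal{C}})=K^*$ matching $M^{gp}_R(\Spec R)=K^*$, the latter holding because the nonzero elements of $R$ groupify to $K^*$. For this I would first record that, $\mathcal{C}$ being integral and regular, $M^{gp}_{\mathcal{C}}$ is the subsheaf of the constant sheaf $K(\mathcal{C})^*$ consisting of rational functions whose divisor is supported on the special fibre $\mathcal{C}_k$: locally such a function splits as a ratio of two regular functions with effective vertical divisors (using regularity, so that effective vertical divisors are locally principal), each of which is invertible on $C$ and hence lies in $M_{\mathcal{C}}$. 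Taking global sections over the integral scheme $\mathcal{C}$, a section of $M^{gp}_{\mathcal{C}}$ is then a single rational function $s\in K(C)^*$ with $\mathrm{div}_{\mathcal{C}}(s)$ vertical.

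The key step is to argue that such an $s$ is forced to lie in $K^*$. Since $\mathrm{div}_{\mathcal{C}}(s)$ has no horizontal component, its restriction to the generic fibre has trivial divisor, so $s|_C\in H^0(C,\mathcal{O}_C^*)=K^*$; as $C$ is dense in $\mathcal{C}$ this restriction is $s$ itself, whence $s\in K^*$. Conversely every $u\pi^m\in K^*$ has vertical divisor $m\,\mathcal{C}_k$ and so lies in $H^0(\mathcal{C}, M^{gp}_{\mathcal{C}})$, and on these sections the comparison map is again the identity. I expect the main obstacle to be precisely this last identification: one must be careful that $M^{gp}$ is a sheafification and check that its global sections genuinely correspond to honest global rational functions with vertical divisor. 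The regularity of $\mathcal{C}$ (making effective vertical divisors locally principal) together with the geometric integrality and properness of $C$ (giving $H^0(C,\mathcal{O}_C^*)=K^*$) are exactly the ingredients that make both inclusions go through.
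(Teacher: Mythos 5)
Your argument is correct. It rests on the same two pillars as the paper's proof, but organizes them differently. The paper argues purely formally: it identifies $M^{gp}_{\mathcal{C}}$ with $j_*\mathcal{O}_C^*$ (and $M^{gp}_R$ with $j_{0,*}\mathcal{O}_K^*$), then uses $f\circ j=j_0\circ f_K$ to rewrite $f_*M^{gp}_{\mathcal{C}}=f_*j_*\mathcal{O}_C^*=j_{0,*}f_{K*}\mathcal{O}_C^*$ and concludes from $f_{K*}\mathcal{O}_C^*=\mathcal{O}_K^*$, which it deduces from $f_{K*}\mathcal{O}_C=\mathcal{O}_K$ (justified there by the existence of a section of $C$). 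You instead build the comparison map and check it on the two stalks of $\Spec(R)$; your computation at the closed point --- global sections of $M^{gp}_{\mathcal{C}}$ are rational functions with vertical divisor, which restrict to divisor-free functions on the proper, geometrically integral curve $C$ and hence lie in $K^*$ --- is exactly the paper's chain of identifications unwound at the level of global sections, with geometric integrality playing the role of the rational point. What your version buys is a genuine justification of the identification $M^{gp}_{\mathcal{C}}=j_*\mathcal{O}_C^*$ (via regularity of $\mathcal{C}$, so that effective vertical divisors are locally principal and any vertical-divisor rational function is locally a ratio of sections of $M_{\mathcal{C}}$), a point the paper passes over with only a brief appeal to the universal property of groupification; what it costs is the extra bookkeeping of the stalkwise reduction, which the paper's functorial formulation avoids.
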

\begin{proof}
Let $j: C \to \mathcal{C}$ and $j_0: \Spec(K) \to \Spec(R)$ be the canonical open immersions. The log structure on $\mathcal{C}$ is the direct image of the trivial log structure on $C$, hence it follows from the universal property of $M_{\mathcal{C}}^{gp}$ that $M_{\mathcal{C}}^{gp}=j_*M_C^{gp}=j_*\mathcal{O}_C^*$. In the same way, we have $M_{R}^{gp}=j_{0,*}M_{K}^{gp}=j_{0,*}\mathcal{O}_{K}^*$.\\
 From the commutative diagram
\[ \begin{tikzcd}
C \arrow{r}{f_K} \arrow[swap]{d}{j} & \Spec(K) \arrow{d}{j_0} \\%
\mathcal{C} \arrow{r}{f}& \Spec(R)
\end{tikzcd}
\]
we deduce that:
\begin{align*}
    (f \circ j)_*(\mathcal{O}_C^*) & = (j_0 \circ f_{K})_*(\mathcal{O}_C^*).
\end{align*}
But, 
\begin{align*}
    (j_0 \circ f_{K})_*(\mathcal{O}_C^*) &= j_{0,*}(f_{K,*}\mathcal{O}_C^*)\\
    &= j_{0,*}\mathcal{O}_{K}^*\\
    &= M_{R}^{gp}
\end{align*}
where the second equality comes from the fact that $f_{K,*}(\mathcal{O}_C)=\mathcal{O}_{K}$, which is a consequence of the fact that $C$ has a section over $K$.
On the other hand, 
\begin{align*}
    (f \circ j)_*(\mathcal{O}_C^*) &=f_*(j_*\mathcal{O}_C^*)\\
    &=f_*M_{\mathcal{C}}^{gp}.
\end{align*}
This ends the proof.
\end{proof}

\begin{defi}\label{Piclog}
We recall that $\Spec(R)$ is seen as a log scheme via the log structure induced by $\Spec(k)$. Let $(Sch/R)$ denote the category of schemes over $R$. It can be seen as a (full) subcategory of the category $(fs/R)$ of fine and saturated log schemes over $R$ as follows: given a morphism of schemes $T \to \Spec(R)$, we put on $T$ the inverse log structure of that on $\Spec(R)$. The topology on $(Sch/R)$ induced by the klf topology is the fppf one.\\
\begin{enumerate}
\item Recall the definition of the following functor:
\begin{center}
\begin{align*}
       \mathbb{G}_{m,log,R}: (fs/R) & \to (Ab)\\
    T & \mapsto \Gamma(T, M_{T}^{gp}).
    \end{align*}
\end{center}
This is a sheaf for the Kummer log flat topology \cite[Theorem 3.2]{kato}. 
\item Consider the following functor
\begin{align*}
    (Sch/R) &\to (Sets)\\
    T  & \mapsto \{\mathbb{G}_{m,log,\mathcal{C}}-log~torsors~on~\mathcal{C}_{T} \}.
\end{align*}

The \textbf{log Picard functor}, denoted by $\Pic^{log}_{\mathcal{C}/R}$, is defined to be the fppf sheafification on $(Sch/R)$ of the previous functor. It can also be defined by the formula 
$$\Pic_{\mathcal{C}/R}^{log}(T) = H^0(T, R^1f_*\mathbb{G}_{m,log,\mathcal{C}}) $$
where $R^1f_*\mathbb{G}_{m,log,\mathcal{C}}$ is computed using the Kummer log flat topology.
\end{enumerate}

\end{defi}

\begin{thm}\label{raynaudlog}
 Let $\mathcal{G}$ be a commutative finite flat group scheme over $R$. We have a canonical isomorphism:
 $$R^1f_*(\mathcal{G}_{\mathcal{C}}^D) \xrightarrow{\simeq} \underline{\Hom}(\mathcal{G}, \Pic^{log}_{\mathcal{C}/R}). $$
 \end{thm}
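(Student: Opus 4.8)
The plan is to transpose Raynaud's (formal) proof of Theorem \ref{raynaud} to the Kummer log flat site, replacing the fppf topology by the klf topology and the coefficient sheaf $\mathbb{G}_m$ by $\mathbb{G}_{m,log}$. Since $\mathcal{G}_{\mathcal{C}} = f^*\mathcal{G}$, the adjunction between $f^*$ and $f_*$ gives an identity of left exact functors $f_*\,\underline{\Hom}(\mathcal{G}_{\mathcal{C}},-) = \underline{\Hom}(\mathcal{G}, f_*(-))$ on klf sheaves over $\mathcal{C}$. I would derive this composite in the two standard ways and compare the resulting Grothendieck spectral sequences, evaluated at $\mathbb{G}_{m,log}$:
\[
{}'E_2^{p,q} = \underline{\Ext}^p_{klf}(\mathcal{G}, R^q_{klf}f_*\mathbb{G}_{m,log}), \qquad {}''E_2^{p,q} = R^p_{klf}f_*\,\underline{\Ext}^q_{klf}(\mathcal{G}_{\mathcal{C}}, \mathbb{G}_{m,log}),
\]
both abutting to the derived functors of the common composite (the acyclicity conditions needed to form these sequences being the same formal input as in Raynaud's case).

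The comparison is driven by two inputs, exactly as classically. First, Lemma \ref{f_*} gives $f_*\mathbb{G}_{m,log} = \mathbb{G}_{m,log,R}$, which computes the $q=0$ row of ${}'E_2$; combined with $R^1_{klf}f_*\mathbb{G}_{m,log} = \Pic^{log}_{\mathcal{C}/R}$ (Definition \ref{Piclog}), the $(0,1)$ entry becomes $\underline{\Hom}(\mathcal{G}, \Pic^{log}_{\mathcal{C}/R})$. Second, I claim the local klf Ext sheaves satisfy $\underline{\Hom}(\mathcal{G}, \mathbb{G}_{m,log}) = \mathcal{G}^D$ and $\underline{\Ext}^q_{klf}(\mathcal{G}, \mathbb{G}_{m,log}) = 0$ for $q\geq 1$, over both $\mathcal{C}$ and $R$. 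Granting this, ${}''E_2$ degenerates to its $q=0$ row and computes $R^n_{klf}f_*\mathcal{G}^D_{\mathcal{C}}$ in degree $n$, while in ${}'E_2$ the entries $\underline{\Ext}^{\geq 1}(\mathcal{G}, \mathbb{G}_{m,log,R})$ vanish, so the degree-one abutment reduces to $\underline{\Hom}(\mathcal{G}, \Pic^{log}_{\mathcal{C}/R})$. Identifying both sides with the degree-one term of the common abutment yields the canonical isomorphism, the underlying map sending a $\mathcal{G}^D$-log torsor, viewed through $\mathcal{G}^D = \underline{\Hom}(\mathcal{G}, \mathbb{G}_{m,log})$, to the induced morphism $\mathcal{G}\to\Pic^{log}_{\mathcal{C}/R}$.

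The heart of the matter, and the step I expect to be the main obstacle, is the Ext computation, which is the genuinely logarithmic input. I would deduce it from the defining exact sequence of klf sheaves $0\to\mathbb{G}_m\to\mathbb{G}_{m,log}\to \overline{M}^{gp}\to 0$, where $\overline{M}^{gp} = M^{gp}/\mathcal{O}^{\times}$ is the characteristic (ghost) sheaf. The key fact is that in the klf topology $\mathbb{G}_{m,log}$ is $n$-divisible with $n$-torsion $\mu_n$ for every $n$ \cite{kato}; a snake-lemma argument applied to multiplication by $n$ then shows $\overline{M}^{gp}[n]=0$ and $\overline{M}^{gp}/n=0$, i.e.\ $\overline{M}^{gp}$ is uniquely divisible (a sheaf of $\Q$-vector spaces) in the klf topology. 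Since $\mathcal{G}$ is killed by some integer $N$, multiplication by $N$ is simultaneously zero and an isomorphism on each $\underline{\Ext}^q_{klf}(\mathcal{G}, \overline{M}^{gp})$, forcing these to vanish for all $q$. The long exact Ext sequence then gives $\underline{\Ext}^q_{klf}(\mathcal{G}, \mathbb{G}_{m,log}) \cong \underline{\Ext}^q_{klf}(\mathcal{G}, \mathbb{G}_m)$ for all $q$, and in particular $\underline{\Hom}(\mathcal{G}, \mathbb{G}_{m,log}) = \underline{\Hom}(\mathcal{G}, \mathbb{G}_m) = \mathcal{G}^D$.

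It then remains to establish the classical-type vanishing $\underline{\Ext}^q_{klf}(\mathcal{G}, \mathbb{G}_m) = 0$ for $q\geq 1$, which is where the most care is needed: one must check that the familiar fppf vanishing (no local extensions of a finite flat group scheme by $\mathbb{G}_m$) survives the passage to the finer klf topology. I would argue this through the change-of-topology morphism $\varepsilon$ from the klf site to the classical flat site, noting that both $\mathcal{G}$ and $\mathbb{G}_m$ carry trivial log structure, and combining Raynaud's classical vanishing with Kato's description of $R\varepsilon_*\mathbb{G}_m$; alternatively, for $\mathcal{G}$ admitting a two-term resolution this is immediate from the klf-surjectivity of multiplication by $n$ on $\mathbb{G}_m$, with the general case following by dévissage. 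Once the vanishing is secured, the spectral-sequence comparison above is purely formal and concludes the proof.
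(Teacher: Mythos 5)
Your overall architecture is the same as the paper's: derive the composite $f_*\underline{\Hom}(f^*\mathcal{G},-)=\underline{\Hom}(\mathcal{G},f_*(-))$ at $\mathbb{G}_{m,log}$ in two ways, identify $\underline{\Hom}(\mathcal{G},\mathbb{G}_{m,log})$ with $\mathcal{G}^D$ by showing the ghost quotient $\mathbb{G}_{m,log}/\mathbb{G}_m$ is uniquely divisible in the klf topology, feed in Lemma \ref{f_*} and the definition of $\Pic^{log}_{\mathcal{C}/R}$, and compare the two low-degree exact sequences. That part of your write-up reproduces the paper's proof faithfully, and the reduction $\underline{\Ext}^q_{klf}(\mathcal{G},\mathbb{G}_{m,log})\cong\underline{\Ext}^q_{klf}(\mathcal{G},\mathbb{G}_m)$ via the torsion-free quotient is exactly the argument used there.

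The genuine gap is your claim that $\underline{\Ext}^q_{klf}(\mathcal{G},\mathbb{G}_m)=0$ for \emph{all} $q\geq 1$, which you then use twice: to degenerate $''E_2$ to its bottom row and, more seriously, to kill the entry $'E_2^{2,0}=\underline{\Ext}^2_{klf}(\mathcal{G},\mathbb{G}_{m,log,R})$ so that the degree-one abutment equals $\underline{\Hom}(\mathcal{G},\Pic^{log}_{\mathcal{C}/R})$. Only the $q=1$ vanishing is a theorem (classically for the fppf topology, and in the Kummer log flat topology by the result of Gillibert that the paper cites); no vanishing of $\underline{\Ext}^2$ of a finite flat group scheme by $\mathbb{G}_m$ is available, and your proposed dévissage does not produce one (a resolution by abelian schemes trades $\underline{\Ext}^2(\mathcal{G},\mathbb{G}_m)$ for $\underline{\Ext}^{\geq 2}$ of abelian schemes, where Breen-type vanishing is not known beyond degree $2$; and for $\mu_n$-type groups the two-term resolution argument controls $\underline{\Ext}^1$, not $\underline{\Ext}^2$). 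The low-degree sequence one actually gets is
$$0 \to R^1_{klf}f_*(\mathcal{G}^D_{\mathcal{C}}) \to \underline{\Hom}(\mathcal{G},\Pic^{log}_{\mathcal{C}/R}) \to \underline{\Ext}^2_{klf}(\mathcal{G},\mathbb{G}_{m,log,R}) \xrightarrow{\ \gamma\ } R^2H(\mathbb{G}_{m,log}),$$
and surjectivity onto $\underline{\Hom}(\mathcal{G},\Pic^{log}_{\mathcal{C}/R})$ requires $\gamma$ to be injective. This is where Raynaud's proof --- and the paper's --- uses the existence of a section of $f$ (the rational point $\mathcal{Q}_0$), which gives a retraction forcing the edge map out of $'E_2^{2,0}$ to be injective. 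Your proof never invokes the section at all, which is the tell: replace the unproven $\underline{\Ext}^2$-vanishing by this section argument and the proof closes.
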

 
 \begin{proof}
 We shall check that the same arguments as for Theorem \ref{raynaud} can be transported to the log setting.\\
We have an exact sequence on the Kummer log flat site (cf. \cite[exact sequence 2.3.2]{Gill1}):
$$0 \to  \mathbb{G}_{m,\mathcal{C}} \to \mathbb{G}_{m,log,\mathcal{C}} \to (\mathbb{G}_{m,log,\mathcal{C}}/\mathbb{G}_{m,\mathcal{C}})^{klf} \to 0,$$
where the quotient is computed in the Kummer log flat site. We deduce a long exact sequence 
\begin{equation}\label{Homm}
    0 \to \underline{\Hom}(\mathcal{G}_{\mathcal{C}},\mathbb{G}_{m,\mathcal{C}}) \to \underline{\Hom}(\mathcal{G}_{\mathcal{C}},\mathbb{G}_{m,log,\mathcal{C}}) \to \underline{\Hom}(\mathcal{G}_{\mathcal{C}},(\mathbb{G}_{m,log,\mathcal{C}}/\mathbb{G}_{m,\mathcal{C}})^{klf}).
\end{equation}

It is proved in \cite[Lemma 2.3.1]{Gill1} that the multiplication by any integer $n$ is an automorphism in $(\mathbb{G}_{m,log,\mathcal{C}}/\mathbb{G}_{m,\mathcal{C}})^{klf}$, hence the quotient $(\mathbb{G}_{m,log,\mathcal{C}}/\mathbb{G}_{m,\mathcal{C}})^{klf}$ has no torsion points. Since $\mathcal{G}_{\mathcal{C}}$ is finite, the last term of the exact sequence (\ref{Homm}) is then trivial and hence we have
\begin{equation}\label{Hom}
\underline{\Hom}(\mathcal{G}_{\mathcal{C}},\mathbb{G}_{m,log,\mathcal{C}})=\underline{\Hom}(\mathcal{G}_{\mathcal{C}},\mathbb{G}_{m,\mathcal{C}})=\mathcal{G}_{\mathcal{C}}^D.
\end{equation}

By the same argument we prove the first equalities in each line
\begin{align}\label{Ext}
\begin{split}
\underline{\Ext}_{klf}^1(\mathcal{G}_{\mathcal{C}},\mathbb{G}_{m,log,\mathcal{C}}) & =\underline{\Ext}_{klf}^1(\mathcal{G}_{\mathcal{C}},\mathbb{G}_{m,\mathcal{C}}) =\{0\} \\
\underline{\Ext}_{klf}^1(\mathcal{G},\mathbb{G}_{m,log}) & =\underline{\Ext}_{klf}^1(\mathcal{G},\mathbb{G}_{m}) = \{0\}
\end{split}
\end{align}
while the second ones follow from \cite[Theorem 4.1]{Gill2}.\\

We can now write the log version of the exact sequence appearing in the proof of Theorem \ref{raynaud}. Consider the following functor:

$$F \mapsto H(F):=f_*\underline{\Hom}(f^*\mathcal{G},F)= \underline{\Hom}(\mathcal{G},f_* F),$$
for some sheaf $F$ on the log flat site of $\mathcal{C}$. 	The derived functor $R^iH$ is the result of two spectral sequences:
\begin{center}
\begin{align*}
 R^qf_*\underline{\Ext}^p(f^*\mathcal{G},F) & \Rightarrow R^{p+q}H(F) \\
\underline{\Ext}^q(\mathcal{G},R^pf_*F) & \Rightarrow R^{p+q}H(F). 
\end{align*}
\end{center}
We take $F$ to be the sheaf $\mathbb{G}_{m,log,\mathcal{C}}$. We obtain a commutative diagram with exact lines: 
\begin{center}
\small
\xymatrix@C=1em{
  0 \ar[r] & R^1f_*(\underline{\Hom}(\mathcal{G}_{\mathcal{C}},\mathbb{G}_{m,log,\mathcal{C}}))  \ar[d]_-{} \ar[r]^-{} & R^1H(\mathbb{G}_{m,log,\mathcal{C}}) \ar[d]_-{} \ar[r]^-{} & f_*\underline{\Ext}_{klf}^1(\mathcal{G}_{\mathcal{C}},\mathbb{G}_{m,log,\mathcal{C}})  \ar[d]^-{}  \\
  0 \ar[r] & \underline{\Ext}^1_{klf}(\mathcal{G},f_*\mathbb{G}_{m,log,\mathcal{C}})  \ar[r]_-{} & R^1H(\mathbb{G}_{m,log,\mathcal{C}}) \ar[r]_-{} & \underline{\Hom}(\mathcal{G},R^1f_*\mathbb{G}_{m,log,\mathcal{C}}) \ar[r] & \underline{\Ext}^2_{klf}(\mathcal{G},f_*\mathbb{G}_{m,log,\mathcal{C}}) \ar[r] & R^2H(\mathbb{G}_{m,log,\mathcal{C}}).
}
\end{center}
Now, using (\ref{Hom}), (\ref{Ext}) and Lemma \ref{f_*}, the diagram becomes 

\begin{center}
\small
\xymatrix@C=1em{
  0 \ar[r] & R^1f_*(\mathcal{G}^D_{\mathcal{C}})  \ar[d]_-{} \ar[r]^-{=} & R^1H(\mathbb{G}_{m,log,\mathcal{C}}) \ar[d]_-{=} \ar[r]^{} & 0 \\
& 0 \ar[r]_-{} & R^1H(\mathbb{G}_{m,log,\mathcal{C}}) \ar[r]_-{} & \underline{\Hom}(\mathcal{G},R^1f_*\mathbb{G}_{m,log,\mathcal{C}}) \ar[r] & \underline{\Ext}^2_{klf}(\mathcal{G},f_*\mathbb{G}_{m,log,\mathcal{C}}) \ar[r] & R^2H(\mathbb{G}_{m,log,\mathcal{C}}).
}
\end{center}

We deduce the following exact sequence

$$0 \to R^1f_{*}(\mathcal{G}_{\mathcal{C}}^D) \to \underline{\Hom}(\mathcal{G},R^1f_{*}\mathbb{G}_{m,log,\mathcal{C}}) \to \underline{\Ext}^2_{klf}(\mathcal{G},f_*\mathbb{G}_{m,log, \mathcal{C}}) \xrightarrow{\gamma} R^2H(\mathbb{G}_{m,log,\mathcal{C}}) $$
with $\gamma$ being injective with the same arguments as in \cite[Proposition 6.2.1]{Raynaud} (after noticing that $\mathcal{C}$ has an $R$-section), which ends the proof.
 \end{proof}

\begin{cor}\label{pointedlog}
Let $\mathcal{G}$ be a commutative finite flat group scheme over $R$. We have a canonical isomorphism:
 $$H^1_{klf}(\mathcal{C},\mathcal{G}_{\mathcal{C}}^D)/H^1_{klf}(R,\mathcal{G}^D) \xrightarrow{\simeq} \Hom(\mathcal{G}, \Pic^{log}_{\mathcal{C}/R}). $$
\end{cor}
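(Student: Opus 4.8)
The plan is to transport the proof of Proposition~\ref{R^1f_*} and Corollary~\ref{Pointed} verbatim to the Kummer log flat topology, using Theorem~\ref{raynaudlog} in place of the classical Theorem~\ref{raynaud}. Concretely, I would apply the global sections functor $H^0(R,-)$ to the sheaf isomorphism of Theorem~\ref{raynaudlog}. On the right-hand side, taking global sections of the internal $\underline{\Hom}$-sheaf yields $\Hom(\mathcal{G},\Pic^{log}_{\mathcal{C}/R})$, the group of morphisms of group functors over $R$. It then remains to identify the left-hand side $H^0(R,R^1_{klf}f_*\mathcal{G}^D_{\mathcal{C}})$ with the quotient $H^1_{klf}(\mathcal{C},\mathcal{G}^D_{\mathcal{C}})/H^1_{klf}(R,\mathcal{G}^D)$, which is the log analogue of Proposition~\ref{R^1f_*}.

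For this identification I would write down the five-term exact sequence attached to the Leray spectral sequence for $f$ in the Kummer log flat site, applied to the sheaf $\mathcal{G}^D_{\mathcal{C}}$:
\begin{small}
$0 \to H^1_{klf}(R, f_*\mathcal{G}^D_{\mathcal{C}}) \to H^1_{klf}(\mathcal{C},\mathcal{G}^D_{\mathcal{C}}) \to H^0(R,R^1_{klf}f_*\mathcal{G}^D_{\mathcal{C}}) \to H^2_{klf}(R, f_*\mathcal{G}^D_{\mathcal{C}}) \xrightarrow{\delta} H^2_{klf}(\mathcal{C},\mathcal{G}^D_{\mathcal{C}})$.
\end{small}
The first step is to compute the pushforward $f_*\mathcal{G}^D_{\mathcal{C}}$. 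Writing $\mathcal{G}^D_{\mathcal{C}}=\underline{\Hom}(\mathcal{G}_{\mathcal{C}},\mathbb{G}_{m,log,\mathcal{C}})$—which is legitimate by the identity~(\ref{Hom}) established inside the proof of Theorem~\ref{raynaudlog}—and using the adjunction/projection formula together with Lemma~\ref{f_*}, I would obtain
$$f_*\mathcal{G}^D_{\mathcal{C}} = f_*\underline{\Hom}(f^*\mathcal{G},\mathbb{G}_{m,log,\mathcal{C}}) = \underline{\Hom}(\mathcal{G},f_*\mathbb{G}_{m,log,\mathcal{C}}) = \underline{\Hom}(\mathcal{G},\mathbb{G}_{m,log,R}) = \mathcal{G}^D,$$
where the final equality is the over-$R$ analogue of~(\ref{Hom}), proved by the same torsion-freeness argument for the log quotient.

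Next I would exploit the $R$-section $\mathcal{Q}_0$ of $f$ to force injectivity of $\delta$: since $\delta$ is the edge homomorphism $f^*$ and $f\circ\mathcal{Q}_0=\mathrm{id}$, the composite $\mathcal{Q}_0^*\circ f^*=(f\circ\mathcal{Q}_0)^*$ is the identity on $H^2_{klf}(R,\mathcal{G}^D)$, so $\delta$ is injective. The five-term sequence then collapses to
$$0 \to H^1_{klf}(R, \mathcal{G}^D) \to H^1_{klf}(\mathcal{C},\mathcal{G}^D_{\mathcal{C}}) \to H^0(R,R^1_{klf}f_*\mathcal{G}^D_{\mathcal{C}}) \to 0,$$
yielding the desired identification of the left-hand side, and combining with Theorem~\ref{raynaudlog} concludes.

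The anticipated obstacle is formal rather than conceptual: one must check that the Leray spectral sequence and its edge maps behave as in the classical setting within the Kummer log flat topology (in particular that $\mathcal{Q}_0$, viewed as a log section, induces the pullbacks making the section argument for injectivity of $\delta$ valid), and that both instances of~(\ref{Hom})—over $\mathcal{C}$ and over $R$—are genuinely available. Since all the required inputs (Lemma~\ref{f_*}, the torsion-freeness of $\mathbb{G}_{m,log}/\mathbb{G}_m$, and the existence of $\mathcal{Q}_0$) are already established, the proof is a faithful transcription of the classical argument of Proposition~\ref{R^1f_*}.
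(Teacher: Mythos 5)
Your proposal is correct and follows essentially the same route as the paper, which proves this corollary by transporting the Leray spectral sequence argument of Proposition~\ref{R^1f_*} to the Kummer log flat site, using Lemma~\ref{f_*} and the torsion-freeness of $(\mathbb{G}_{m,log}/\mathbb{G}_m)^{klf}$ to identify $f_*\mathcal{G}^D_{\mathcal{C}}$ with $\mathcal{G}^D$, and the section $\mathcal{Q}_0$ to split off the $H^2$ term. Your write-up simply makes explicit the details the paper leaves implicit.
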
 
 
 \begin{proof}
 The proof is similar to the proof of Lemma \ref{R^1f_*} and Corollary \ref{Pointed}. 
 \end{proof}

\begin{rema}
\normalfont
View $\mathcal{Q}$ as a log point. We call a \textbf{pointed} $\mathcal{G}$-log torsor over $\mathcal{C}$ (relative to $\mathcal{Q}$) a couple ($g: \mathcal{Y} \to \mathcal{C}$, $\mathcal{P}$) where $\mathcal{Y}$ is a $\mathcal{G}$-log torsor over $\mathcal{C}$, and $\mathcal{P}$ is an $R$-log section over $\mathcal{Y}$ whose image by $g$ is $\mathcal{Q}$. Equivalently, a $\mathcal{G}$-log torsor over $\mathcal{C}$ is pointed (relative to $\mathcal{Q}$) if its restricton to $\mathcal{Q}$ is trivial. We denote by $H^1_{klf}(\mathcal{C},\mathcal{Q},\mathcal{G})$ the cohomology group that classifies isomorphism classes of pointed $\mathcal{G}$-log torsors over $\mathcal{C}$ (relative to $\mathcal{Q}$). We prove similarly as for (\ref{PointedGPic}) the isomorphism

  \begin{equation}\label{PointedlogGPic}
  H^1_{klf}(\mathcal{C},\mathcal{Q},\mathcal{G}^D)\simeq \Hom(\mathcal{G}, \Pic^{log}_{\mathcal{C}/R}).
  \end{equation}
\end{rema}

\subsection{Extension of torsors} 
 The isomorphism (\ref{PointedlogGPic}) shows that extending torsors from $C$ to $\mathcal{C}$ reduces to the extension of some group functors and morphisms between them. The aim of this section is to replace the log Picard functor by a representable object, namely the Néron model of the Jacobian of the curve. This gives the so-called \textit{Néron-log} torsors and induces a sufficient condition for the initial torsor to extend, that moreover can be checked in practice.\\ 

 Keeping the same notation as in the previous section, if $J=\Pic^0_{C/K}$ is the Jacobian of $C$, $\mathcal{J}$ will denote its Néron model over $R$ (cf. \cite[\S 1.3, Corollary 2]{BLR}).
 
Since $C$ has a section, it follows from \cite[\S 8.1, Proposition 4]{BLR} that we have an isomorphism:
\begin{equation*}\label{secPic}
 \Pic_{C/K}(T) \simeq \Pic(C \times_K T)/ \Pic(T).
\end{equation*}

\noindent Given that the curve $C$ is a proper scheme over a field, $\Pic_{C/K}$ is representable by a $K$-group scheme, called the Picard scheme, and we denote it in the same way \cite[\S 8.2. Theorem 3]{BLR}. 

\begin{prop} \label{JPic}
The closed immersion $J \hookrightarrow \Pic_{C/K}$ in the generic fiber extends uniquely to a morphism $\mathcal{J} \to \Pic_{\mathcal{C}/R}^{log}$. 
\end{prop}

\begin{proof}
Let $T$ be an $R$-scheme endowed with the inverse log structure of that of $\Spec(R)$. The Leray spectral sequence associated to $f: \mathcal{C} \to \Spec(R)$ and $\mathbb{G}_{m,log,\mathcal{C}}$ gives an exact sequence: \\

\begin{center}
$0 \to H^1_{klf}(T,f_*\mathbb{G}_{m,log,\mathcal{C}}) \to H^1_{klf}(\mathcal{C}_T,\mathbb{G}_{m,log,\mathcal{C}}) \to H^0(T,R^1f_*\mathbb{G}_{m,log,\mathcal{C}}) \to H^2_{klf}(T,f_*\mathbb{G}_{m,log,\mathcal{C}}) \rightarrow H^2_{klf}(\mathcal{C}_T,\mathbb{G}_{m,log,\mathcal{C}}).$
\end{center}

All the log structures coming into play here are inverse image of that of $\Spec(R)$, hence it follows from Lemma \ref{f_*} that $f_*\mathbb{G}_{m,log,\mathcal{C}} = \mathbb{G}_{m,log,R}$. Therefore, the exact sequence becomes:    

\begin{center}
$0 \to H^1_{klf}(T,\mathbb{G}_{m,log,R}) \to H^1_{klf}(\mathcal{C}_T,\mathbb{G}_{m,log,\mathcal{C}}) \to H^0(T,R^1f_*\mathbb{G}_{m,log,\mathcal{C}}) \to H^2_{klf}(T,\mathbb{G}_{m,log,R}) \xrightarrow{\delta} H^2_{klf}(\mathcal{C}_T,\mathbb{G}_{m,log,\mathcal{C}})$
\end{center}

where $\delta$ is injective since $\mathcal{C}$ has a section. Therefore, the previous exact sequence implies the following one

$$
0 \to H^1_{klf}(T,\mathbb{G}_{m,log,R}) \to H^1_{klf}(\mathcal{C}_T,\mathbb{G}_{m,log,\mathcal{C}}) \to H^0(T,R^1f_*\mathbb{G}_{m,log,\mathcal{C}}) \to 0.
$$

Now, assume that $T$ is smooth over $R$. Then, $\mathcal{C}_{T}$ is $\mathcal{C}$-smooth, and hence is regular. Therefore, using \cite[Proposition 2.2.6]{Gill1}, we have that $H^1_{klf}(\mathcal{C}_T,\mathbb{G}_{m,log,\mathcal{C}})=\Pic(C \times T_K)$ and $H^1_{klf}(T,\mathbb{G}_{m,log,R})=\Pic(T_K)$.  Hence, for any $R$-smooth scheme $T$, we have 
$$\Pic_{C/K}(T_K)=\Pic(C \times T_K)/\Pic(T_K) = H^0(T,R^1f_*\mathbb{G}_{m,log,\mathcal{C}})=\Pic^{log}_{\mathcal{C}/R}(T).$$
In particular, since $\mathcal{J}$ is smooth, we have 
$$\Pic_{C/K}(J)=\Pic^{log}_{\mathcal{C}/R}(\mathcal{J}). $$
Therefore, the injective canonical map $J \hookrightarrow \Pic_{C/K}$ extends uniquely into a morphism $\mathcal{J} \to \Pic^{log}_{\mathcal{C}/R}$.

\end{proof}

\begin{rema}
\normalfont
The fact that for any smooth $R$-scheme $T$ the restriction to the generic fiber $\Pic^{log}_{\mathcal{C}/R}(T)=\Pic_{C/K}(T_K)$ is bijective implies that the Néron model of $\Pic_{C/K}$ (cf. \cite[\S 10.2, Theorem 1]{BLR}) represents $\Pic^{log}_{\mathcal{C}/R}$ in the smooth site. Note that this also holds in the fppf site, i.e. the Néron model of $\Pic_{C/K}$ represents the relative Picard functor in the smooth site. This uses the fact that the relative Picard functor is separated when $\mathcal{C}$ has an $R$-section (cf. \cite[\S 9.5]{BLR} and \cite[Theorem 8.2 (i)]{BLR}).
\end{rema}

\begin{defi}
\begin{enumerate}
\item If $G$ is a commutative finite group scheme over $K$, we call a model of $G$ over $R$ a finite flat group scheme $\mathcal{G}$ over $R$, whose generic fiber is isomorphic to $G$.
\item We call a pointed $\mathcal{G}$-log torsor over $\mathcal{C}$ whose associated morphism $\mathcal{G}^D \to \Pic^{log}_{\mathcal{C}/R}$ factors through $\mathcal{J}$ (via the map of Proposition \ref{JPic}), a \textbf{Néron-log} torsor. Given the separatedness of $\mathcal{J}$, the factorization is unique when it exists.
\end{enumerate}
\end{defi}

\begin{lem}\cite[Proposition 3.6]{Gill2}
If $\mathcal{G}$ is a model of $G$, then the restriction morphism
$$H^1_{klf}(\mathcal{C},\mathcal{G}) \to H^1_{fppf}(C,G)$$
is injective.
\end{lem}

\begin{cor}\label{uni}
If $\mathcal{G}$ is a model of $G$, then the restriction morphism
$$H^1_{klf}(\mathcal{C},\mathcal{Q},\mathcal{G}) \to H^1_{fppf}(C,Q,G)$$
is injective.
\end{cor}

\begin{cor}\label{G^D -> J}
Let $G$ be a commutative finite $K$-group scheme and let $Y \to C$ be an fppf pointed $G$-torsor. If the associated morphism $h_{Y}: G^D \to J$ extends to a morphism $\mathcal{G}^D \to \mathcal{J}$ \footnote{Once the model $\mathcal{G}$ is fixed, if the morphism $h_Y$ extends to $\mathcal{G}^D \to \mathcal{J}$, this extension is unique because $\mathcal{J}$ is separated.} for some $R$-model $\mathcal{G}$ of $G$, then the $G$-torsor $Y \to C$ extends uniquely to a Néron-log (pointed) $\mathcal{G}$-torsor over $\mathcal{C}$.
\end{cor}

\begin{proof}
The unicity of extension once $\mathcal{G}$ is fixed follows from Corollary \ref{uni}. The existence follows from (\ref{PointedlogGPic}) and Proposition \ref{JPic}.
\end{proof}

\begin{prop}\label{G^D->J^0}
Let $G$ be a commutative finite $K$-group scheme and let $Y \to C$ be an fppf pointed $G$-torsor. Assume that the associated morphism $h_{Y}:G^D \to J$ extends to a morphism $\mathcal{G}^D \to \mathcal{J}$ for some model $\mathcal{G}$ of $G$, so that we have a Néron-log extension of the torsor. Then this extended torsor comes from an fppf one if and only if the morphism $\mathcal{G}^D \to \mathcal{J}$ factors through $\mathcal{J}^0$, the identity component of $\mathcal{J}$.
\end{prop}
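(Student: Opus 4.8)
The plan is to route everything through the Picard-functor dictionary established above and then play two uniqueness statements against one another. Write $\psi : \mathcal{G}^D \to \mathcal{J}$ for the given extension of $G^D \to J$, and recall from the proof of Corollary~\ref{G^D -> J} that the associated log torsor corresponds, under the isomorphism (\ref{PointedlogGPic}), to the composite $\phi : \mathcal{G}^D \xrightarrow{\psi} \mathcal{J} \to \mathcal{N} \xrightarrow{c} \Pic^{log}_{\mathcal{C}/R}$, where $\mathcal{N}$ is the Néron model of $\Pic_{C/K}$ and $c$ is the canonical map of Corollary~\ref{N->Pic}. On the fppf side, the isomorphism (\ref{pointed GPic}) identifies pointed fppf $\mathcal{G}$-torsors over $\mathcal{C}$ with $\Hom(\mathcal{G}, \Pic^0_{\mathcal{C}/R})$, and the external input I would use is the identification $\mathcal{J}^0 \simeq \Pic^0_{\mathcal{C}/R}$ valid in our situation (the identity component of the Néron model of the Jacobian agrees with that of the relative Picard functor of the regular model $\mathcal{C}$). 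Throughout I will exploit that $\mathcal{G}^D$ is finite flat over $R$, so its generic fiber $\mathcal{G}^D_K = G^D$ is schematically dense; hence any two morphisms from $\mathcal{G}^D$ into a separated $R$-scheme that agree generically are equal.

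For the direction $(\Leftarrow)$, suppose $\psi$ factors as $\mathcal{G}^D \xrightarrow{\phi_0} \mathcal{J}^0 \hookrightarrow \mathcal{J}$. Via $\mathcal{J}^0 \simeq \Pic^0_{\mathcal{C}/R}$, the morphism $\phi_0$ defines a pointed fppf $\mathcal{G}$-torsor over $\mathcal{C}$; since $\phi_{0,K}$ is the given morphism $G^D \to J$, this fppf torsor restricts to $Y \to C$ on the generic fiber. Its image under the comparison map $H^1_{fppf}(\mathcal{C},\mathcal{Q}_0,\mathcal{G}^D) \to H^1_{klf}(\mathcal{C},\mathcal{Q}_0,\mathcal{G}^D)$ is therefore a logarithmic extension of $Y \to C$ of the same type, and by the uniqueness part of Corollary~\ref{G^D -> J} it must coincide with our log torsor. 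Hence the log extension comes from an fppf one.

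For the direction $(\Rightarrow)$, suppose the log torsor is the image of a pointed fppf $\mathcal{G}$-torsor. Under (\ref{pointed GPic}) the latter corresponds to a morphism $\phi_0 : \mathcal{G}^D \to \Pic^0_{\mathcal{C}/R} \simeq \mathcal{J}^0$; because this fppf torsor maps to our log torsor, its generic fiber is again $Y \to C$, so $\phi_{0,K}$ equals the given $G^D \to J$. Composing with $\mathcal{J}^0 \hookrightarrow \mathcal{J}$ yields a second extension $\psi' : \mathcal{G}^D \to \mathcal{J}$ of $G^D \to J$. By the schematic density of $\mathcal{G}^D_K$ and the separatedness of $\mathcal{J}$ over $R$, the morphisms $\psi$ and $\psi'$, agreeing on $\mathcal{G}^D_K$, must be equal. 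Therefore $\psi = \psi'$ factors through $\mathcal{J}^0$.

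The step I expect to be the most delicate is the compatibility underlying the translation: namely that, through the isomorphisms (\ref{PointedlogGPic}) and (\ref{pointed GPic}) together with $\mathcal{J}^0 \simeq \Pic^0_{\mathcal{C}/R}$, the map $c$ of Corollary~\ref{N->Pic} restricts on $\mathcal{N}^0 = \mathcal{J}^0$ to the natural inclusion $\Pic^0_{\mathcal{C}/R} \hookrightarrow \Pic^{log}_{\mathcal{C}/R}$ induced by $\mathbb{G}_m \to \mathbb{G}_{m,log}$, so that the fppf torsor attached to $\phi_0$ genuinely maps to the log class attached to $\phi$. I would establish this by checking that both maps agree on generic fibers and then invoking separatedness together with the schematic density of generic fibers of flat $R$-schemes, exactly as in the argument for $(\Rightarrow)$; once this is in place, both implications reduce cleanly to the two uniqueness principles above.
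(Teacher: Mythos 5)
Your proposal is correct and follows essentially the same route as the paper: both reduce the statement, via the dictionary between pointed torsors and homomorphisms to the Picard functor, to the identification $\mathcal{J}^0\simeq\Pic^0_{\mathcal{C}/R}$ (which the paper justifies by noting that the section forces the gcd of the geometric multiplicities of the components of $\mathcal{C}_k$ to be $1$, and citing Raynaud). Your version merely makes explicit the uniqueness and schematic-density arguments that the paper leaves implicit; note also that the ``delicate compatibility'' you flag at the end is already handled by the uniqueness of log extensions of a given generic torsor, so no separatedness of $\Pic^{log}_{\mathcal{C}/R}$ is needed.
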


\begin{proof}
Using (\ref{PointedGPic}), it suffices to justify why $\Pic_{\mathcal{C}/R}^0=\mathcal{J}^0$. Since $f: \mathcal{C} \to \Spec(R)$ has a section, the $gcd$ of the geometric multiplicities of the irreducible components of the special fiber of $\mathcal{C}$ is equal to $1$, and the result follows from \tsb{\cite[Proposition 4.2.1 (1) and Theorem 8.2 (i)]{Raynaud}}. 
\end{proof}

\begin{rema} \label{premier}
\normalfont
Let $G$ be a commutative finite $K$-group scheme and let $\Phi$ denote the group of components of the special fiber $\mathcal{J}_k$ of $\mathcal{J}$. Given a pointed $G$-torsor $Y \to C$, assume that the morphism $h_Y: G^D \to J$ extends to a morphism $\mathcal{G}^D \to \mathcal{J}$, so that we have a Néron-log extension of the torsor over $\mathcal{C}$. Then if $\# \mathcal{G} \bigwedge \# \Phi =1$, the extended torsor is fppf. Indeed, by definition, $\Phi=\mathcal{J}_k/\mathcal{J}_k^0$. So, we have an exact sequence of fppf-sheaves of groups:

$$ 0 \to \mathcal{J}_k^0 \to \mathcal{J}_k \to \Phi \to 0$$
from which, if $\mathcal{G}_k$ denotes the special fiber of $\mathcal{G}$, we deduce the long exact sequence of fppf sheaves of groups

$$ 0\to \Hom_k(\mathcal{G}_k^D,\mathcal{J}_k^0) \to \Hom_k(\mathcal{G}_k^D,\mathcal{J}_k) \to \Hom_k(\mathcal{G}_k^D, \Phi) \to \cdots. $$ 
Since $\# \mathcal{G} \bigwedge \# \Phi =1$, $\Hom_R(\mathcal{G}_k^D, \Phi) =  \{0\}$ and the result follows from Proposition \ref{G^D->J^0}.
\end{rema}

\section{Obstruction of extension into fppf torsors}\label{section2}
As in the previous section, $C$ denotes a smooth projective and geometrically connected $K$-curve, with a rational point $Q$, which extends uniquely into an $R$-section $\mathcal{Q}$ over some fixed $R$-regular model $\mathcal{C}$ of $C$. We denote by $J$ the Jacobian of $C$ and by $\mathcal{J}$ its Néron model. We fix a commutative finite $K$-group scheme $G$.\\
We give in this section a description of the Néron-log torsors in terms of the log Poincaré extension. In addition, we compute the obstruction for a Néron-log torsor to come from an fppf one. In particular, we will see that this obstruction can be expressed using the obstruction for the Poincaré extension of $J_C$ by $\mathbb{G}_{m,C}$ to extend into an fppf one (cf. \cite[Exposé VIII; Remark 7.2]{Groth1}). To do this, we first need to relate the isomorphism (\ref{PointedGPic}) to the Poincaré extension.

\subsection{Preliminaries}

\noindent For each $R$-scheme $T$, we denote by $\mathcal{Q}_{T}: T \to \mathcal{C} \times T$ the section of $\mathcal{C}\times T$ induced by the $R$-point $\mathcal{Q}\in \mathcal{C}(R)$. A \textbf{rigidified line bundle} over $\mathcal{C} \times T$ is a couple $(\mathcal{L},\alpha)$ where $\mathcal{L}$ is a line bundle over $\mathcal{C} \times T$ and $\alpha$ is a trivialization $\mathcal{O}_T \simeq \mathcal{Q}_{T}^{*}\mathcal{L}$.\\

The functor $\Pic^{rig}(\mathcal{C}, .): (Sch/R) \to (Sets)$, which associates to each $R$-scheme $T$
the set $\Pic^{rig}(\mathcal{C}, T)$ of isomorphism classes of line bundles on $\mathcal{C} \times T$, which are rigidified along the
section $\mathcal{Q}_{T}$, is in fact a sheaf with respect to the fppf topology. We denote by $\Pic^{0,rig}(\mathcal{C},.)$ the subfunctor which associates to each $R$-scheme $T$ the set of isomorphism classes of rigidified line bundles with relative degree zero, i.e. line bundles whose restrictions to each geometric fiber of $\mathcal{C}\times T \to T$ have degree zero.\\ 
Given that $\mathcal{C}$ has a section, it is shown in \cite[\S 8.1]{BLR}, in the discussion that follows Proposition 4, that the sheaf $\Pic^{rig}(\mathcal{C},.)$ is canonically isomorphic to the relative Picard functor $\Pic_{\mathcal{C}/R}$. For any $R$-scheme $T$, forgetting the trivialization induces an isomorphism of groups
\begin{equation*}
\Pic^{rig}(\mathcal{C},T) \xrightarrow{\simeq} \Pic_{\mathcal{C}/R}(T). 
\end{equation*}
In fact, if one replaces $G$ by $\mathbb{G}_m$ and $T$ by $\mathrm{Spec}(R)$, then  this isomorphism is none other than the isomorphism (\ref{Pointcouple}).

It follows that 
\begin{equation}\label{rig0}
\Pic^{0,rig}(\mathcal{C},T) \simeq \Pic^0_{\mathcal{C}/R}(T) = \Hom_{R-\mathrm{schemes}}(T,\Pic^0_{\mathcal{C}/R})
\end{equation}
where the second isomorphism follows from \cite[\S 9.3; Corollary 13]{BLR}. 
\\
At the level of the generic fibers, if we let $T= \Pic^0_{C/K}=J$, then the identity map $\Pic^0_{C/K} \to \Pic^0_{C/K}$ corresponds to the universal line bundle $\mathcal{P}_K$ over $C \times J$, which is a rigidified line bundle of relative degree $0$. It is called the \textbf{Poincar\'e line bundle}  over $C \times J$. It satisfies the following universal property: to any rigidified line bundle $\mathcal{L}$ of degree $0$ over $C \times T$ corresponds, by definition, a unique element of $\Pic^{0,rig}(C,T)$,  thus a unique morphism of schemes $f: T \to \Pic^0_{C/K}=J$, and we have:
$$
(id_{C} \times f)^{*} \mathcal{P}_K \simeq \mathcal{L}.
$$
\hop \hop \hop

The following lemma will be used in the sequel in order to handle  group extensions via torsors. We state it in a general framework in order to cover schemes and log schemes at the same time.

\begin{lem}
\label{lem:SGA7}
Let $G$ and $H$ be two group objects in some topos. Let $pr_1,pr_2$ and $m$ be the projections and multiplication map $H\times H\to H$. Then the data of an extension $E$ of $H$ by $G$ is equivalent to the data of a $G$-torsor $E\to H$, together with an isomorphism of $G$-torsors
\begin{equation}
\label{eq:square}
m^*E\simeq pr_1^{*}E \times^G pr_2^{*}E,
\end{equation}
where $\times^G $ denotes the contracted product of $G$-torsors.
\end{lem}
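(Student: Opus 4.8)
The plan is to set up an explicit dictionary between the group structure on $E$ and the multiplicative structure \eqref{eq:square} on the $G$-torsor $\pi\colon E\to H$, in the spirit of the corresponding statement in SGA~7. Throughout I would take $G$ commutative and the extension central: this is the only case I shall use (for instance $G=\mathbb{G}_m$), and it is exactly the situation in which the contracted product of two right $G$-torsors is again a $G$-torsor, the operation being the Baer sum that computes the group law on $H^1(-,G)$.

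First I would treat the direction sending an extension to a torsor, which is the formal one. Starting from $1\to G\xrightarrow{i}E\xrightarrow{\pi}H\to 1$, the map $\pi$ is a right $G$-torsor for right translation by $G=\Ker\pi$, since $G$ acts simply transitively on the fibres and $\pi$ is an epimorphism. The multiplication $m_E\colon E\times E\to E$ satisfies $\pi\circ m_E=m\circ(\pi\times\pi)$, hence covers $m$. Realising $pr_1^*E$ and $pr_2^*E$ as the $G$-torsors over $H\times H$ with total spaces $E\times H$ and $H\times E$, their contracted product has total space $(E\times E)/G$ for the antidiagonal action $g\cdot(e_1,e_2)=(e_1g,e_2g^{-1})$. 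I would then check that $(e_1,e_2)\mapsto m_E(e_1,e_2)$ descends to this quotient --- which is precisely where centrality of $G$ enters, as $m_E(e_1g,e_2g^{-1})=e_1ge_2g^{-1}=e_1e_2$ --- and that it is equivariant for the residual $G$-action, so that it defines the isomorphism \eqref{eq:square}.

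Next I would treat the converse. Given a $G$-torsor $E\to H$ together with \eqref{eq:square}, reading the isomorphism on total spaces as above produces a morphism $m_E\colon E\times E\to E$ lying over $m$, and the task is to promote $(E,m_E)$ to a group object extending $H$ by $G$. For the unit I would restrict \eqref{eq:square} along the identity section $e_H\colon\ast\to H$: pulling back by $(e_H,e_H)$ shows $e_H^*E$ is idempotent for the Baer sum, hence trivial, and its canonical section furnishes the identity of $E$; pulling back by $(e_H,\mathrm{id})$ and $(\mathrm{id},e_H)$ gives the unit axioms. The inverse can then be produced fibrewise from the torsor structure, which I expect to be routine once associativity and the unit are in hand.

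The hard part will be associativity, and this is the only non-formal step. Pulling back \eqref{eq:square} along the two maps $H^3\to H^2$, namely $(h_1,h_2,h_3)\mapsto(h_1h_2,h_3)$ and $(h_1,h_2,h_3)\mapsto(h_1,h_2h_3)$, and composing with \eqref{eq:square} again, yields two isomorphisms $pr_1^*E\times^G pr_2^*E\times^G pr_3^*E\xrightarrow{\sim}m_3^*E$ over $H^3$ (where $m_3$ is the triple product); the identity $m_E\circ(m_E\times\mathrm{id})=m_E\circ(\mathrm{id}\times m_E)$ amounts exactly to the coincidence of these two isomorphisms. In the precise formulation this compatibility over $H^3$ (a cocycle constraint, together with the normalisation on the identity section) is part of the data carried by \eqref{eq:square}; establishing it is the crux. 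Granting it, $(E,m_E)$ is a group object, $\Ker\pi\cong G$ and $\pi$ exhibit it as an extension of $H$ by $G$, and the two constructions are visibly mutually inverse, which gives the claimed equivalence.
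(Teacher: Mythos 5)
The paper offers no argument here: it cites SGA~7, Exp.~VII, \S 1.1.6 verbatim, so your explicit unfolding is necessarily a different route, and it is essentially the standard one. Your forward direction is complete: $\pi\colon E\to H$ is a $G$-torsor under translation by $G=\pi^{-1}(1)$, and $m_E$ descends to $pr_1^*E\times^G pr_2^*E$ exactly because the action of $G$ is central, as you note (centrality, or commutativity of everything in sight as in this paper, is also what makes the contracted product of two right $G$-torsors again a $G$-torsor, so your standing hypothesis is the right one). The substantive point is the one you flag yourself in the converse direction: taken literally, with \eqref{eq:square} an arbitrary isomorphism, the statement is not correct, and no proof can manufacture associativity out of nothing. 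Concretely, when $E$ is the trivial torsor an isomorphism \eqref{eq:square} amounts to a $2$-cochain $f\colon H\times H\to G$, the induced law is $(g_1,h_1)(g_2,h_2)=(g_1g_2f(h_1,h_2),h_1h_2)$, and this is associative precisely when $f$ is a $2$-cocycle. In SGA~7 the coincidence over $H^3$ of the two composite isomorphisms (and the normalisation along the unit section) is imposed as part of the data, so your decision to ``grant'' it is the correct reading of the reference rather than a hole in your argument; but it does mean the lemma as printed is stated loosely. It is worth observing why this costs nothing downstream: in the only application, Lemma~\ref{PoinExt1}, the torsor is a rigidified degree-zero line bundle, and rigidified line bundles on the projective varieties $J\times J\times C$ and $J\times J\times J\times C$ admit no nontrivial automorphisms, so the normalised isomorphism \eqref{eq:square} is unique and its coherence over $H^3$ is automatic --- the rigidity that the paper invokes for uniqueness of the extension structure is also what secures its existence.
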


\begin{proof}
This is \cite[VII, \S 1.1.6]{Groth1}.
\end{proof}

In the setting of Lemma~\ref{lem:SGA7}, we say that a $G$-torsor $E\to H$ satisfies the theorem of the square if there exists an isomorphism as in \eqref{eq:square}.

\noindent Any rigidified line bundle of degree zero on an abelian scheme verifies the theorem of the square. Therefore, if we view $J \times C$ as the abelian scheme $J_{C}$ over $C$ and if we denote by $m$, (resp. $pr_1$, $pr_2$) the multiplication (resp. the first projection, the second projection) in $J_{C}$ induced by that of $J$, we have an isomorphism of rigidified line bundles
$$
m^{*}\mathcal{P}_K \simeq pr_1^{*}\mathcal{P}_K \otimes pr_2^{*}\mathcal{P}_K.
$$

We deduce from that:

\begin{lem} \label{PoinExt1}
$\mathcal{P}_K$ has a unique underlying structure of extension of $J_{C}$ by $ \mathbb{G}_{m,C}$.
\end{lem}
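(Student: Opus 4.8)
The plan is to read the statement through the equivalence of Lemma~\ref{lem:SGA7}, applied in the topos of schemes over $C$ with $H=J_{C}$ and $G=\mathbb{G}_{m,C}$. Translating line bundles into $\mathbb{G}_m$-torsors by removing the zero section, $\mathcal{P}_K$ becomes a $\mathbb{G}_{m,C}$-torsor $E\to J_{C}$, and the tensor product of line bundles corresponds to the contracted product $\times^{\mathbb{G}_{m,C}}$ of torsors. Under this dictionary the theorem of the square recalled just before the statement, $m^{*}\mathcal{P}_K\simeq pr_1^{*}\mathcal{P}_K\otimes pr_2^{*}\mathcal{P}_K$, reads exactly as an isomorphism $m^{*}E\simeq pr_1^{*}E\times^{\mathbb{G}_{m,C}}pr_2^{*}E$ of the shape \eqref{eq:square}. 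Feeding this datum into Lemma~\ref{lem:SGA7} produces a structure of extension of $J_{C}$ by $\mathbb{G}_{m,C}$ on $\mathcal{P}_K$, which settles existence.

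For this to be legitimate I first have to make \eqref{eq:square} canonical and compatible with the identity section $e\colon C\to J_{C}$. The restriction of $\mathcal{P}_K$ to $\{0_J\}\times C$ is the line bundle attached to the trivial class $0\in\Pic^0_{C/K}$, hence is trivial; combining this with the rigidification of $\mathcal{P}_K$ along $\{Q_0\}\times J$ at the point $(Q_0,0_J)$ singles out a canonical trivialization along $e$, which I would use to normalize \eqref{eq:square} so that its pullbacks along $e\times\mathrm{id}$ and $\mathrm{id}\times e$ are the tautological isomorphisms. The main obstacle is then to check that this normalized isomorphism satisfies the associativity and commutativity constraints implicit in Lemma~\ref{lem:SGA7}, i.e. that it genuinely defines a group law and not merely a square structure; I expect this to follow from the theorem of the cube for $\mathcal{P}_K$ on the abelian scheme $J_{C}/C$, exactly as in the classical construction of the group law on a line bundle of degree zero, the remaining steps being a formal transport of the arguments already used above.

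Uniqueness I would argue by rigidity. If $\mu_1,\mu_2$ are two extension structures on the torsor $E$, Lemma~\ref{lem:SGA7} attaches to them two normalized isomorphisms of the shape \eqref{eq:square}, which differ by an automorphism of $pr_1^{*}E\times^{\mathbb{G}_{m,C}}pr_2^{*}E$, that is by a global unit $u\in\mathcal{O}^{*}(J_{C}\times_C J_{C})$. Now $J_{C}\times_C J_{C}=(J\times_K J)\times_K C$ is proper, smooth and geometrically connected over $K$, so its global units are the constants $K^{*}$; the normalization along either axis then forces $u=1$, whence $\mu_1=\mu_2$. Equivalently, one may phrase this as the vanishing $\Hom_{C\text{-gp}}(J_{C},\mathbb{G}_{m,C})=0$, valid because a character of a proper connected group scheme is trivial. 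This makes the extension structure unique, completing the proof.
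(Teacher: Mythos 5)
Your proof follows essentially the same route as the paper's: existence by feeding the theorem-of-the-square isomorphism into Lemma~\ref{lem:SGA7}, and uniqueness by observing that two multiplicative structures differ by a global unit on the proper, geometrically connected variety $J\times J\times C$, which the rigidification forces to equal $1$ --- this is precisely the paper's remark that a rigidified line bundle on a projective variety has no nontrivial automorphisms. (Only your parenthetical claim that this is ``equivalent'' to $\Hom_{C\text{-gp}}(J_{C},\mathbb{G}_{m,C})=0$ is slightly loose, since the discrepancy between two extension structures on a fixed torsor is a unit on $J_{C}\times_C J_{C}$ rather than a homomorphism out of $J_{C}$; your primary argument does not rely on this aside.)
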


\begin{proof}
The existence of an extension structure follows from Lemma~\ref{lem:SGA7}. The uniqueness follows from the fact that $m^{*}\mathcal{P}_K $ is a rigidified line bundle on the projective variety $J\times J\times C$, hence has no nontrivial automorphism.
\end{proof}

\noindent By abuse of notation, we denote this extension by $\mathcal{P}_K$ again. \hop

\noindent We have arrived at the following description of Theorem \ref{raynaud} in terms of the Poincaré extension:

\begin{lem} \label{Mon lemme}
Let $\mathcal{G}$ be a commutative finite flat $R$-group scheme. We have isomorphisms
$$ H^1_{fppf}(\mathcal{C},\mathcal{Q},\mathcal{G}) \simeq \Hom_{R}(\mathcal{G}^D,\Pic^0_{\mathcal{C}/R})\simeq \Pic^{0,rig}(\mathcal{C} \times \mathcal{G}^D)$$
where the second isomorphism is given at the level of the generic fibers by  
$$f \mapsto \mathcal{L}:=(id_C \times f)^*\mathcal{P}_K. $$
\end{lem}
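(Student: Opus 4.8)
The plan is to obtain both isomorphisms by assembling results already established in the excerpt, the only delicate point being that the middle term must consist of genuine \emph{group-scheme} homomorphisms, not arbitrary morphisms of schemes.

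For the first isomorphism I would apply Corollary~\ref{Pointed} not to $\mathcal{G}$ itself but to its Cartier dual $\mathcal{G}^D$. Since $\mathcal{G}$ is finite flat and commutative, so is $\mathcal{G}^D$, and biduality gives $(\mathcal{G}^D)^D\simeq\mathcal{G}$. The morphism $f:\mathcal{C}\to\Spec(R)$ is proper, flat, of finite presentation and carries the section $\mathcal{Q}_0$; moreover $f_*\mathcal{O}_{\mathcal{C}}=\mathcal{O}_R$ because $\mathcal{C}$ is integral, normal and proper over the discrete valuation ring $R$ with geometrically connected generic fibre. Hence Corollary~\ref{Pointed} (equivalently its pointed reformulation (\ref{pointed GPic}), read with the roles of $\mathcal{G}$ and $\mathcal{G}^D$ exchanged) yields $H^1_{fppf}(\mathcal{C},\mathcal{Q},\mathcal{G})\simeq\Hom(\mathcal{G}^D,\Pic_{\mathcal{C}/R})$; and because $\mathcal{G}^D$ is finite, hence torsion, and $\mathcal{C}$ is a relative curve, this $\Hom$ factors through $\Pic^0_{\mathcal{C}/R}$, producing the first isomorphism onto $\Hom_R(\mathcal{G}^D,\Pic^0_{\mathcal{C}/R})$.

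For the second isomorphism I would specialize the representability statement (\ref{rig0}) to $T=\mathcal{G}^D$, which identifies $\Pic^{0,rig}(\mathcal{C}\times\mathcal{G}^D)$ with $\Pic^0_{\mathcal{C}/R}(\mathcal{G}^D)=\Hom_{R\text{-schemes}}(\mathcal{G}^D,\Pic^0_{\mathcal{C}/R})$. The point that needs care is that (\ref{rig0}) a priori produces \emph{all} morphisms of schemes, whereas the target of the first isomorphism is the subgroup of group homomorphisms. The key step is therefore to show that a morphism $f:\mathcal{G}^D\to\Pic^0_{\mathcal{C}/R}$ is a homomorphism of group schemes if and only if the associated rigidified line bundle $\mathcal{L}_f$ satisfies the theorem of the square on $\mathcal{C}\times\mathcal{G}^D\times\mathcal{G}^D$ with respect to the group law of $\mathcal{G}^D$. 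This is a direct application of Lemma~\ref{lem:SGA7}: writing $m,pr_1,pr_2:\mathcal{G}^D\times\mathcal{G}^D\to\mathcal{G}^D$ and using that the group law on $\Pic^0$ corresponds to tensor product, the identity $f\circ m=(f\circ pr_1)\cdot(f\circ pr_2)$ and the isomorphism $(id\times m)^*\mathcal{L}_f\simeq(id\times pr_1)^*\mathcal{L}_f\otimes(id\times pr_2)^*\mathcal{L}_f$ translate into one another through (\ref{rig0}). Thus $\Hom_R(\mathcal{G}^D,\Pic^0_{\mathcal{C}/R})$ is identified precisely with the rigidified degree-zero line bundles satisfying the theorem of the square, which is the intended meaning of $\Pic^{0,rig}(\mathcal{C}\times\mathcal{G}^D)$.

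For the explicit description on generic fibres I would invoke the universal property of the Poincaré bundle: over $C\times J$ the bundle $\mathcal{P}_K$ is the universal rigidified degree-zero line bundle (Lemma~\ref{PoinExt1}), so for $f_K:\mathcal{G}_K^D\to J$ the line bundle corresponding to $f_K$ under (\ref{rig0}) is exactly $(id_C\times f_K)^*\mathcal{P}_K$, giving the stated formula. The main obstacle I anticipate is the group-structure bookkeeping in the second isomorphism, namely matching the theorem-of-the-square condition on line bundles with the homomorphism condition on $f$, together with the fact that $\mathcal{P}_K$ is only available over the generic fibre $C\times J$ (there is in general no global Poincaré bundle over $\mathcal{C}\times\Pic^0_{\mathcal{C}/R}$), so that the integral content of the isomorphism rests on (\ref{rig0}) while the clean Poincaré formula is stated generically.
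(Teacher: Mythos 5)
Your proof follows essentially the same route as the paper's: the first isomorphism is Corollary~\ref{Pointed} (in its pointed form (\ref{pointed GPic})) applied with the roles of $\mathcal{G}$ and $\mathcal{G}^D$ exchanged, the second comes from specializing (\ref{rig0}) to $T=\mathcal{G}^D$, and the generic-fibre formula is the universal property of $\mathcal{P}_K$. The one place where you go beyond the paper's own (very terse) proof is in noting that (\ref{rig0}) literally identifies $\Pic^{0,rig}(\mathcal{C}\times\mathcal{G}^D)$ with \emph{all} scheme morphisms $\mathcal{G}^D\to\Pic^0_{\mathcal{C}/R}$ while the left and middle terms only see group homomorphisms, and in cutting out the correct subgroup via the theorem-of-the-square condition through Lemma~\ref{lem:SGA7}; the paper simply cites (\ref{rig0}) without this restriction, so your reading is the more careful one and is the correct way to make the second isomorphism precise.
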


\subsection{The log Poincaré extension}

Now, we shall prove that over a regular scheme endowed with the log structure defined by a divisor, extensions of smooth group schemes by $\mathbb{G}_m$ over the open subset of triviality can be extended to the logarithmic site.

\begin{prop} \label{prop ext}
Let $X$ be a regular and integral scheme, endowed with the logarithmic structure defined by a divisor with complement $U$. Let $\mathcal{H}$ be a commutative $X$-group scheme, smooth and of finite type over $X$. Then the restriction morphism
$$
\Ext_{klf}^1(\mathcal{H}, \mathbb{G}_{m,X}) \to \Ext^1_{fppf}(\mathcal{H}_U,\mathbb{G}_{m,U})
$$
is an isomorphism, where $\mathcal{H}_U:=\mathcal{H} \times_X U$.
\end{prop}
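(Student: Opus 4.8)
The plan is to factor the restriction map through the log multiplicative group and to treat the two resulting maps separately. Write $j\colon U\hookrightarrow X$ for the open immersion. Since $\mathcal{H}$ is smooth over the regular scheme $X$, it is itself regular, and $\mathcal{H}_U$ is its open of triviality. On $U$ the log structure is trivial, so klf covers of $U$ are fppf covers and $\mathbb{G}_{m,log,X}|_U=\mathbb{G}_{m,U}$; in particular the target already equals $\Ext^1_{klf}(\mathcal{H}_U,\mathbb{G}_{m,U})$. The inclusion $\mathbb{G}_{m,X}\hookrightarrow\mathbb{G}_{m,log,X}$ followed by restriction to $U$ then factors the map of the statement as
$$\Ext^1_{klf}(\mathcal{H},\mathbb{G}_{m,X})\xrightarrow{\ a\ }\Ext^1_{klf}(\mathcal{H},\mathbb{G}_{m,log,X})\xrightarrow{\ b\ }\Ext^1_{fppf}(\mathcal{H}_U,\mathbb{G}_{m,U}),$$
so it will suffice to prove that $a$ and $b$ are both isomorphisms.

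For $b$, the point I would use is that $\mathbb{G}_{m,log,X}$ is, in degrees $\le 1$, the direct image of $\mathbb{G}_{m,U}$ along $j$ for the klf topology. Concretely, \cite[Proposition 2.2.6]{Gill1} (already invoked above) gives, for every regular object $X'$ of the klf site of $X$, the identities $H^1_{klf}(X',\mathbb{G}_{m,log})=\Pic(X'_U)$ and $\Gamma(X',\mathbb{G}_{m,log})=\Gamma(X'_U,\mathbb{G}_m)$; this identifies $\tau_{\le1}Rj_*\mathbb{G}_{m,U}$ with $\tau_{\le1}\mathbb{G}_{m,log,X}$. Feeding this into the adjunction $R\Hom_{klf,X}(\mathcal{H},Rj_*\mathbb{G}_{m,U})\simeq R\Hom_{fppf,U}(\mathcal{H}_U,\mathbb{G}_{m,U})$ and reading off degree one shows that $b$ is an isomorphism.

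The heart of the matter will be that $a$ is an isomorphism. Here I would use the exact sequence of klf sheaves on $X$
$$0\to\mathbb{G}_{m,X}\to\mathbb{G}_{m,log,X}\to\mathcal{F}\to 0,\qquad \mathcal{F}:=(\mathbb{G}_{m,log,X}/\mathbb{G}_{m,X})^{klf},$$
already used in the proof of Theorem~\ref{raynaudlog}; here $\mathcal{F}$ is supported on $D$ and is uniquely divisible, since multiplication by every integer is an automorphism of $\mathbb{G}_{m,log,X}$ by \cite[Lemme 2.3.1]{Gill1}. The long exact sequence of $\Ext^\bullet_{klf}(\mathcal{H},-)$ then shows that $a$ is an isomorphism as soon as $\Hom_{klf}(\mathcal{H},\mathcal{F})=0$ and $\Ext^1_{klf}(\mathcal{H},\mathcal{F})=0$. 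This is exactly where $\mathcal{H}$ being smooth rather than finite forces genuine work: unlike in Theorem~\ref{raynaudlog}, torsion-freeness of $\mathcal{F}$ no longer suffices, and one must exploit both the unique divisibility of $\mathcal{F}$ and its support on $D$.

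I would read these two vanishings geometrically through Lemma~\ref{lem:SGA7}. By Example~\ref{exam:logdiv}, a class in $\Ext^1_{klf}(\mathcal{H},\mathbb{G}_{m,X})$ is represented by a class in $\DivRat(\mathcal{H},\mathcal{H}_D)/\DivPrinc(\mathcal{H})$, rigidified along the unit section $e\colon X\to\mathcal{H}$ and satisfying the theorem of the square on $\mathcal{H}\times_X\mathcal{H}$, where $\mathcal{H}_D:=\mathcal{H}\times_X D$; a class in the target is such a datum on $\mathcal{H}_U$ with honestly integral coefficients. For injectivity, a class dying on $U$ is represented by a $\mathbb{Q}$-divisor supported on the vertical divisor $\mathcal{H}_D$; the theorem of the square together with triviality along $e$ should force it to be a pull-back from the base, hence trivial after rigidification. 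For surjectivity, I would extend a divisor on $\mathcal{H}_U$ by Zariski closure and rigidify by subtracting a pull-back from $X$; the failure of the theorem of the square is then a vertical class on $\mathcal{H}\times_X\mathcal{H}$ which, by a theorem-of-the-cube/rigidity argument, is torsion, and is therefore killed once one is allowed $\mathbb{Q}$-coefficients, i.e.\ in the klf group. Controlling this vertical, torsion obstruction---and verifying that unique divisibility of $\mathcal{F}$ is precisely what trivializes it---is the main obstacle of the proof.
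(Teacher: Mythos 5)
Your proposal does not prove the statement: the step you yourself identify as ``the heart of the matter'' --- the vanishing of $\Ext^1_{klf}(\mathcal{H},\mathcal{F})$ for $\mathcal{F}=(\mathbb{G}_{m,log,X}/\mathbb{G}_{m,X})^{klf}$, equivalently the surjectivity of your map $a$ --- is left open, and the heuristic you offer for it does not work as stated. If you extend a degree-zero datum on $\mathcal{H}_U$ by Zariski closure, the failure of the theorem of the square is a class on $\mathcal{H}\times_X\mathcal{H}$ represented by a \emph{vertical} divisor with \emph{integral} coefficients; such a class has no reason to be torsion in $\Pic$ (on an arithmetic surface, $\mathcal{O}(E_i)$ for a component $E_i$ of the special fibre is typically of infinite order), and passing to $\Q$-coefficients makes classes divisible, not zero. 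What one actually needs is to \emph{correct} the naive closure by a vertical $\Q$-divisor so that the theorem of the square holds on the nose; nothing in your sketch produces such a correction. Your step $b$ also rests on the identification $\tau_{\le1}Rj_*\mathbb{G}_{m,U}\simeq\tau_{\le1}\mathbb{G}_{m,log,X}$, which \cite[Proposition 2.2.6]{Gill1} only gives on \emph{regular} objects, whereas the Kummer covers appearing in the klf site are generally non-regular; so even the ``easy'' half of your factorization is not justified at the level of sheaves on the site. Finally, note that unique divisibility of $\mathcal{F}$ gives $\Hom(\mathcal{H},\mathcal{F})=0$ only for torsion $\mathcal{H}$; for smooth $\mathcal{H}$ you would need a separate (connectedness/finiteness of $\pi_0$) argument, which you gesture at but do not give.

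The paper's route supplies exactly the missing input and avoids the sheaf-theoretic reduction altogether. It first invokes \cite[Theorem 4.1.1]{Gill1}: over the spectrum of a discrete valuation ring the extension problem for a smooth group scheme by $\mathbb{G}_m$ has a \emph{unique} logarithmic solution. This guarantees that a given fppf extension over $U$ extends over an open $V\subseteq X$ containing all codimension-one points, so $\codim(X\setminus V)\ge 2$. It then proves a purity lemma (Lemma~\ref{extension droites log}): for the divisorial log structure, $H^1_{klf}(X,\mathbb{G}_m)\to H^1_{klf}(V,\mathbb{G}_m)$ is an isomorphism when $\codim(X\setminus V)\ge2$, deduced from Gillibert's exact sequence $0\to H^1_{fppf}\to H^1_{klf}\to\bigoplus(\Q/\Z)D_m\to0$ and extension of Weil divisors. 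Applying this to the regular scheme $\mathcal{H}$, the $\mathbb{G}_m$-log torsor underlying the extension spreads from $\mathcal{H}_V$ to $\mathcal{H}$, and the theorem of the square propagates by the uniqueness part of the same purity statement on $\mathcal{H}\times_X\mathcal{H}$; Lemma~\ref{lem:SGA7} then reassembles the extension structure. Injectivity is the same codimension-one uniqueness. If you want to salvage your plan, you should expect to re-prove these two ingredients (the DVR case and purity) anyway; they are precisely what controls the ``vertical obstruction'' your sketch leaves unmanaged.
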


\noindent In order to prove this proposition, we shall use the following lemma.

\begin{lem} \label{extension droites log}
Let $X$ be a regular and integral scheme endowed with the logarithmic structure defined by a divisor, and let $V$ be an open subset of $X$ such that $\mathrm{codim}(X\backslash V) \geq 2$. Then the restriction map
$$
H^1_{klf}(X,\mathbb{G}_m) \to H^1_{klf}(V, \mathbb{G}_m)
$$
is an isomorphism.
\end{lem}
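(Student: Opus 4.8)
The plan is to reduce the statement to a purely divisorial computation, using the explicit description of $H^1_{klf}(\cdot,\mathbb{G}_m)$ in terms of rational divisors from Example~\ref{exam:logdiv}. Write $D$ for the divisor defining the log structure on $X$ and $U = X\setminus D$ for its open of triviality. First I would observe that the restriction of a log structure defined by a divisor to an open subset is again the log structure defined by the trace of that divisor; concretely, $(\mathcal{O}_X\cap j_*\mathcal{O}_U^{*})|_V = \mathcal{O}_V\cap (j_V)_*\mathcal{O}_{U_V}^{*}$, so the log structure that $V$ inherits from $X$ is the one attached to $D_V := D\cap V$, with open of triviality $U_V := U\cap V$. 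Hence Example~\ref{exam:logdiv} applies to both $X$ and $V$, giving canonical isomorphisms
$$H^1_{klf}(X,\mathbb{G}_m)\simeq \DivRat(X,D)/\DivPrinc(X), \qquad H^1_{klf}(V,\mathbb{G}_m)\simeq \DivRat(V,D_V)/\DivPrinc(V),$$
under which — by functoriality of this isomorphism with respect to restriction to opens — the restriction map on cohomology becomes the restriction map on rational divisor classes.

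Next I would analyze the restriction map on divisors. Since $X$ is regular, hence locally factorial, prime divisors are exactly the codimension-one integral closed subschemes. Because $\codim(X\setminus V)\geq 2$, no such subscheme of $X$ can be contained in $X\setminus V$; thus $Z\mapsto Z\cap V$ is a bijection from prime divisors of $X$ to prime divisors of $V$, with inverse given by Zariski closure. This yields an isomorphism $\Div(X)\simeq\Div(V)$, hence $\Div(X)\otimes\Q\simeq\Div(V)\otimes\Q$ after tensoring. Under this bijection the components of $D$ correspond to those of $D_V$ (each meets $V$, having codimension one) and the prime divisors in $U$ to those in $U_V$. Consequently the membership condition defining $\DivRat$ — integrality of the coefficients along the part lying in the open of triviality — is preserved in both directions, so the isomorphism restricts to $\DivRat(X,D)\simeq\DivRat(V,D_V)$.

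It then remains to match the subgroups of principal divisors. As $X$ is integral and $V$ is a dense open subscheme, $X$ and $V$ share a common field of rational functions $F$, and for $g\in F^{\times}$ the restriction of $\operatorname{div}_X(g)$ to $V$ equals $\operatorname{div}_V(g)$ under the above identification of prime divisors; hence $\DivPrinc(X)$ maps isomorphically onto $\DivPrinc(V)$. Combining these identifications, the restriction map induces an isomorphism on the quotients $\DivRat/\DivPrinc$, which is precisely the asserted isomorphism $H^1_{klf}(X,\mathbb{G}_m)\simeq H^1_{klf}(V,\mathbb{G}_m)$.

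I expect the only genuinely delicate points to be the verification that the log structure induced on $V$ is the one attached to $D_V$, and the compatibility (functoriality) of the isomorphism of Example~\ref{exam:logdiv} with restriction to $V$ — that is, that the cohomological restriction map really does correspond to the naive restriction of rational divisor classes. Everything else is the classical fact that excising a closed subset of codimension at least two from a regular integral scheme leaves the groups of rational and principal divisors unchanged.
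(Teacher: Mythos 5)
Your proof is correct and follows essentially the same route as the paper's: both arguments reduce the statement to the classical fact that, on a regular integral scheme, deleting a closed subset of codimension at least two changes neither the group of (Weil $=$ Cartier) divisors nor the subgroup of principal divisors. The only difference is one of packaging --- you apply the divisorial description of $H^1_{klf}(\cdot,\mathbb{G}_m)$ from Example~\ref{exam:logdiv} (\cite[Theorem 3.1.3]{Gill1}) directly to $X$ and to $V$, whereas the paper runs the same divisor comparison through the exact sequence $0\to H^1_{fppf}(\cdot,\mathbb{G}_m)\to H^1_{klf}(\cdot,\mathbb{G}_m)\to\bigoplus_{m}(\Q/\Z).D_m\to 0$ of \cite[Corollary 3.1.4]{Gill1} and a diagram chase on the two outer columns.
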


A brief comment on the terminology: $\mathrm{codim}(X\backslash V) \geq 2$ means that every point of $X\backslash V$ has codimension at least $2$ in $X$.

\begin{proof}[Proof of Lemma~\ref{extension droites log}]
Let $D$ be the divisor defining the log structure of $X$. Then one can write $D:= \sum_{m=1}^r D_m$, where the $D_m$'s are irreducible and reduced divisors.
Consider the following diagram, in which the lines are exact according to \cite[Corollary 3.1.4]{Gill1}, and where the vertical arrows denote the restrictions to $V$:
$$
\xymatrix{
  0 \ar[r] & H^1_{fppf}(X,\mathbb{G}_m)  \ar[d]_-{} \ar[r]^-{} & H^1_{klf}(X,\mathbb{G}_m) \ar[d]_-{} \ar[r]^-{} & \bigoplus\limits_{m=1}^r(\mathbb{Q}/ \mathbb{Z}).D_m  \ar[d]^-{} \ar[r] & 0 \\
  0 \ar[r] & H^1_{fppf}(V,\mathbb{G}_m)  \ar[r]_-{} & H^1_{klf}(V, \mathbb{G}_m) \ar[r]_-{} & \bigoplus\limits_{m=1}^r(\mathbb{Q}/ \mathbb{Z}).(D_m)_{V}  \ar[r] & 0.
}
$$

\noindent Since $X$ is regular, Weil and Cartier divisors agree on $X$. In addition, given that $X$ is normal and integral and that $\mathrm{codim}(X\backslash V)\geq 2$, any Weil divisor on $V$ extends uniquely into a Weil divisor on $X$. Therefore, the two vertical arrows in the two ends of the diagram above are bijective. It follows that the vertical arrow in the middle is bijective, too.

\end{proof}

\begin{proof}[Proof of Proposition~\ref{prop ext}]
Consider the restriction morphism
$$
\Ext_{klf}^1(\mathcal{H}, \mathbb{G}_{m,X}) \to \Ext^1_{fppf}(\mathcal{H}_U,\mathbb{G}_{m,U}).
$$ 

Let us prove that it is surjective. Let us fix an extension $E$ of $\mathcal{H}_U$ by $\mathbb{G}_{m,U}$. Let $V$ be the largest open subset of $X$ over which $E$ has a logarithmic extension. Then $V$ contains $U$, and it also contains all points of $X$ of codimension $1$ because  the extension problem has a solution over the spectrum of a discrete valuation ring by \cite[Theorem 4.1.1]{Gill1}. In other terms, we have that $\codim(X \setminus V) \geq 2$.

Now, according to Lemma~\ref{lem:SGA7}, this extension over $V$ can be viewed as a $\mathbb{G}_m$-log torsor over $\mathcal{H}_V$ that verifies the theorem of the square. Let us now apply Lemma~\ref{extension droites log} to this $\mathbb{G}_m$-log torsor on $\mathcal{H}_V$, observing, on the one hand, that $\mathcal{H}_V$ is an open subset of $\mathcal{H}$ whose complement has codimension at least $2$, and, on the other hand, that $\mathcal{H}$ being smooth over $X$, it is also regular, and the log structure induced by $X$ on $\mathcal{H}$ is also the log structure defined by a divisor on $\mathcal{H}$. Therefore, our $\mathbb{G}_m$-log torsor on $\mathcal{H}_V$  can be extended to a $\mathbb{G}_m$-log torsor on $\mathcal{H}$. This $\mathbb{G}_m$-log torsor also satisfies the theorem of the square by uniqueness of the extension over $\mathcal{H} \times_X \mathcal{H}$. According to Lemma~\ref{lem:SGA7} again, we obtain a logarithmic extension of $\mathcal{H}$ by $\mathbb{G}_{m,X}$ that extends $E$, hence the result.

As for the injectivity of the restriction map, it follows from the unicity of extension on points of $X$ of codimension $1$ (cf. \cite[Theorem 4.1.1]{Gill1}), and the unicity of extension from an open subset of $X$ whose complementary has codimension greater than $2$ to the whole $X$ (cf. Lemma~\ref{extension droites log}).

\end{proof}

\begin{cor} \label{PoinExt}
The Poincar\'e extension $\mathcal{P}_K$ from Lemma~\ref{PoinExt1} can be uniquely extended into a logarithmic extension of $\mathcal{J}_{\mathcal{C}}$ by $\mathbb{G}_{m,\mathcal{C}}$, that we denote by $\mathcal{P}^{log}$. Moreover, it is rigidified  with respect to the section induced by $\mathcal{Q}$.

\end{cor}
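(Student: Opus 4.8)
The plan is to obtain both the existence and the uniqueness of $\mathcal{P}^{log}$ as a direct application of Proposition~\ref{prop ext}, and then to treat the rigidification by a second application of the same proposition, this time over the base $\Spec(R)$.

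First I would set $X=\mathcal{C}$, which is regular and integral and carries the log structure defined by its special fiber $\mathcal{C}_k$, so that the open of triviality is exactly $U=C$. I would take $\mathcal{H}=\mathcal{J}_{\mathcal{C}}:=\mathcal{J}\times_R\mathcal{C}$; since $\mathcal{J}$ is the N\'eron model of the abelian variety $J$ it is smooth, separated, commutative and of finite type over $R$, hence $\mathcal{H}$ is a commutative, smooth, finite type $\mathcal{C}$-group scheme, and its restriction to $U$ is $\mathcal{H}_U=\mathcal{J}\times_R C=J\times_K C=J_C$. Proposition~\ref{prop ext} then yields an isomorphism
$$\Ext^1_{klf}(\mathcal{J}_{\mathcal{C}},\mathbb{G}_{m,\mathcal{C}})\xrightarrow{\ \simeq\ }\Ext^1_{fppf}(J_C,\mathbb{G}_{m,C}).$$
By Lemma~\ref{PoinExt1} the Poincar\'e extension $\mathcal{P}_K$ is a well-defined element of the right-hand side, so it has a unique preimage $\mathcal{P}^{log}$ on the left; this is the sought logarithmic extension of $\mathcal{J}_{\mathcal{C}}$ by $\mathbb{G}_{m,\mathcal{C}}$, and its uniqueness is precisely the injectivity of the restriction map.

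For the rigidification, the plan is to show that the pullback of $\mathcal{P}^{log}$ along the section induced by $\mathcal{Q}_0$ is the \emph{trivial} extension. I would apply Proposition~\ref{prop ext} a second time, now with $X=\Spec(R)$ (canonical log structure, open of triviality $U=\Spec(K)$) and $\mathcal{H}=\mathcal{J}$, giving an isomorphism $\Ext^1_{klf}(\mathcal{J},\mathbb{G}_{m,R})\simeq\Ext^1_{fppf}(J,\mathbb{G}_{m,K})$. Pulling back $\mathcal{P}^{log}$ along $\mathcal{Q}_0$ produces a class in the left-hand group whose generic fiber is $Q_0^{*}\mathcal{P}_K$. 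The underlying line bundle of $Q_0^{*}\mathcal{P}_K$ is the restriction of the Poincar\'e bundle to $\{Q_0\}\times J$, which is trivial precisely because $\mathcal{P}_K$ is rigidified along $Q_0$; since the forgetful map $\Ext^1(J,\mathbb{G}_{m,K})\to\Pic(J)$ is injective by the Barsotti--Weil isomorphism $\Ext^1(J,\mathbb{G}_{m,K})\simeq\Pic^0(J)$, the extension $Q_0^{*}\mathcal{P}_K$ is itself trivial. By the uniqueness half of the second isomorphism, $\mathcal{Q}_0^{*}\mathcal{P}^{log}$ is then the trivial logarithmic extension; in particular its underlying line bundle on $\mathcal{J}$ is trivial, and the resulting trivialization, chosen to match the generic one, is the desired rigidification along the section induced by $\mathcal{Q}_0$.

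The genuine content having been pushed into Proposition~\ref{prop ext}, the two points I would check with care are, on the one hand, that $\mathcal{J}_{\mathcal{C}}$ really satisfies the hypotheses of that proposition---the only nontrivial input being that the N\'eron model of a Jacobian is of finite type---and, on the other hand, the compatibility of the restriction isomorphism with base change along $\mathcal{Q}_0$, which is what allows the rigidification to be read off over $\Spec(R)$. I expect this last bookkeeping, together with the invocation of the injectivity of $\Ext^1(J,\mathbb{G}_{m,K})\to\Pic(J)$, to be the only genuinely subtle step; everything else is a formal consequence of the two applications of Proposition~\ref{prop ext}.
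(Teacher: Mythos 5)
Your proposal is correct and follows essentially the same route as the paper: existence and uniqueness come from Proposition~\ref{prop ext} applied to $\mathcal{J}_{\mathcal{C}}$ over $\mathcal{C}$, and the rigidification is read off from the compatibility of the restriction isomorphisms with pullback along $\mathcal{Q}_0$ (the paper packages this as a commutative square whose vertical arrows are the two instances of Proposition~\ref{prop ext} you invoke). Your extra remark that triviality of the underlying line bundle of $Q_{0,J}^{*}\mathcal{P}_K$ forces triviality of the extension, via the injectivity of $\Ext^1(J,\mathbb{G}_{m,K})\to\Pic(J)$, makes explicit a step the paper leaves implicit.
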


\begin{proof}
Since $\mathcal{J}$ is the N\'eron model of $J$ over $\Spec(R)$, it is smooth and of finite type over $\Spec(R)$. Therefore, $\mathcal{J}_{\mathcal{C}}$ is smooth and is of finite type over $\mathcal{C}$ and, according to Proposition~\ref{prop ext}, it follows that $\mathcal{P}_K$ extends uniquely into a logarithmic extension of $\mathcal{J}_{\mathcal{C}}$ by $\mathbb{G}_{m,\mathcal{C}}$, that we denote by $\mathcal{P}^{log}$.

In order to check that $\mathcal{P}^{log}$ is rigidified, consider the commutative diagram:
\[
\begin{tikzcd}
\Ext_{klf}^1(\mathcal{J} \times \mathcal{C}, \mathbb{G}_{m,\mathcal{C}}) \arrow{r}{\mathcal{Q}_{\mathcal{J}}^{*}} \arrow[]{d}{\simeq} & \Ext_{klf}^1(\mathcal{J}, \mathbb{G}_m)  \arrow[]{d}{\simeq} \\%
\Ext_{fppf}^1(J \times C, \mathbb{G}_{m,C})  \arrow{r}{Q_{J}^{*}}& \Ext_{fppf}^1(J, \mathbb{G}_m)
\end{tikzcd}
\]
in which $\mathcal{Q}_{\mathcal{J}}: \mathcal{J} \to \mathcal{J} \times \mathcal{C}$ denotes the section induced by $\mathcal{Q}$, and the vertical maps are the restrictions which are isomorphisms by Proposition~\ref{prop ext}.  Since $\mathcal{P}_K$ is rigidified along the section induced by $Q$, it implies that $Q_{J}^{*}\mathcal{P}_K$ is trivial, and then it follows from the diagram above that $\mathcal{Q}_{\mathcal{J}}^{*} \mathcal{P}^{log}$ is trivial as well.
\end{proof}
It is shown in \cite[VIII, Remark 7.2]{Groth1} that the morphism which associates to an extension its generic fiber:
$$
\Ext^1_{fppf}(\mathcal{J}^0_{\mathcal{C}}, \mathbb{G}_{m,\mathcal{C}}) \to \Ext^1_{fppf}(J_{C}, \mathbb{G}_{m,C})
$$
is an isomorphism. Therefore, we can extend $\mathcal{P}_K$ (see Lemma~\ref{PoinExt1}) into an fppf extension of $\mathcal{J}^0$ by $\mathbb{G}_m$ over $\mathcal{C}$, that we denote by $\mathcal{P}^0$. In fact, given the uniqueness of the extension, and since any fppf extension can be seen as a logarithmic one, it follows that the pull-back of $\mathcal{P}^{log}$ by the inclusion map $\mathcal{J}^0\subseteq \mathcal{J}$ is equal to $\mathcal{P}^0$, which implies for the associated line bundles that:
$$
\mathcal{P}^{log}|_{\mathcal{J}^0\times\mathcal{C}}= \mathcal{P}^0.
$$
In particular, $\mathcal{P}^0$ is rigidified.

\subsection{Obstruction of extension into fppf torsors}\label{obstr}
If $\mathcal{G}$ is a commutative finite flat $R$-group scheme, we have two canonical isomorphisms 
\begin{equation}\label{eq:2}
H^1_{klf}(\mathcal{C},\mathcal{G}) \simeq \Ext^1_{klf}(\mathcal{G}^D_{\mathcal{C}}, \mathbb{G}_{m,\mathcal{C}})
\end{equation}

\begin{equation}\label{eq:2.0}
H^1_{fppf}(\mathcal{C},\mathcal{G}) \simeq \Ext^1_{fppf}(\mathcal{G}^D_{\mathcal{C}}, \mathbb{G}_{m,\mathcal{C}}) 
\end{equation}
where the log structures are forgotten in the second one (cf. \cite[Corollary 4.2]{Gill2} and \cite[Theorem 2]{Wat} respectively).

\begin{thm} \label{thm prolong log}
For a given model $\mathcal{G}$ of $G$, the map 
\begin{align*}
\Hom(\mathcal{G}^D, \mathcal{J}) & \to \{\mathrm{pointed~N\acute{e}ron-log~} \mathcal{G}\mathrm{-torsors} \}\\
i & \mapsto (i\times id_{\mathcal{C}})^{*}(\mathcal{P}^{log})
\end{align*}

is bijective. In particular, if $Y \to C$ is a pointed fppf $G$-torsor and if the associated morphism $h_Y$ extends into a morphism $i_0: \mathcal{G}^D \to \mathcal{J}$, the torsor $Y$ extends uniquely into the pointed Néron-log $\mathcal{G}$-torsor $(i_0\times id_{\mathcal{C}})^{*}\mathcal{P}^{log}$.\\
Analogously, the map 
\begin{align*}
\Hom(\mathcal{G}^D, \mathcal{J}^0) & \to \{\mathrm{pointed~fppf~} \mathcal{G}\mathrm{-torsors} \}\\
i & \mapsto (i\times id_{\mathcal{C}})^{*}(\mathcal{P}^{0})
\end{align*}
is bijective. In particular, if $i_0$ factors through $\mathcal{J}^0$, then $Y$ extends uniquely into the fppf $\mathcal{G}$-pointed torsor $(i\times id_{\mathcal{C}})^{*}\mathcal{P}^{0}$.
\end{thm}

\begin{proof}
Let $\mathcal{P}^{log}$ be the extension from Corollary~\ref{PoinExt}. Using the morphism $(i \times id_{\mathcal{C}}): \mathcal{G}^D \times \mathcal{C} \to \mathcal{J} \times \mathcal{C}$, we pull-back  $\mathcal{P}^{log}$ and get a logarithmic extension $(i \times id_{\mathcal{C}})^{*}(\mathcal{P}^{log})$ of $\mathcal{G}^D_{\mathcal{C}}$ by $\mathbb{G}_{m,\mathcal{C}}$. Finally, using (\ref{eq:2}), we can associate to the extension $(i \times id_{\mathcal{C}})^{*}(\mathcal{P}^{log})$ a $\mathcal{G}$-log torsor over $\mathcal{C}$. \\
Now, let $T$ be a Néron-log $\mathcal{G}$-torsor over $\mathcal{C}$. It corresponds by the isomorphism (\ref{PointedlogGPic}) to a unique morphism $j: \mathcal{G}^D \to \mathcal{J}$. Generically, by Lemma \ref{Mon lemme}, it is isomorphic to the torsor $(j_K \times id_{C})^{*}(\mathcal{P}_K)$. Hence, $T$ and $(j \times id_{\mathcal{C}})^{*}(\mathcal{P}^{log})$ are both $\mathcal{G}$-torsors over $\mathcal{C}$ with the same generic fiber. It follows from Corollary \ref{uni} that they are the same. Hence, $(i \times id_{\mathcal{C}})^{*}(\mathcal{P}^{log})$ is a Néron-log torsor and the constructed map is surjective. The same arguments and the fact that $\mathcal{J}$ is separated  allow to prove that the map is also injective.\\
Therefore, if $h_Y$ extends into a morphism $i_0: \mathcal{G}^D \to \mathcal{J}$, by the compatibility of the isomorphism \eqref{eq:2} and the isomorphism in Lemma~\ref{Mon lemme}, $Y$ extends into the Néron-log $\mathcal{G}$-torsor $(i_0 \times id_{\mathcal{C}})^{*}(\mathcal{P}^{log})$ and the extension is unique by Corollary \ref{uni}.

The second part is proved analogously using the isomorphism (\ref{eq:2.0}) this time.

\end{proof}

Let $E_Y$ be the unique element of $\Ext_{fppf}^1(G_C^D,\mathbb{G}_{m,C})$ that corresponds to the torsor $Y$ according to (\ref{eq:2.0}). If $h_Y$ extends into $i: \mathcal{G}^D \to \mathcal{J}$, we know from Theorem~\ref{thm prolong log} and (\ref{eq:2}) that $E_Y$ extends uniquely into an element $E_Y^{log}$ of $\Ext^1_{klf}(\mathcal{G}^D_{\mathcal{C}},\mathbb{G}_{m,\mathcal{C}})$, which is explicitly given by  
$$
E_Y^{log}:=(i \times id_{\mathcal{C}})^{*} \mathcal{P}^{log}.
$$

Let us consider the following diagram where the exact lines are extracted from the spectral sequence comparing fppf and log flat cohomology \cite[exact sequence 4.1.4]{Gill1}, and where the vertical arrows are induced by the map $i:\mathcal{G}^D \to \mathcal{J}$:

$$
\xymatrix{
   0 \ar[r] & \Ext^1_{fppf}(\mathcal{J}_{\mathcal{C}},\mathbb{G}_{m,\mathcal{C}})  \ar[d]_-{} \ar[r]^-{\delta} & \Ext^1_{klf}(\mathcal{J}_{\mathcal{C}},\mathbb{G}_{m,\mathcal{C}}) \ar[d]_-{} \ar[r]^-{\gamma} & \Hom_{\mathcal{C}}(\mathcal{J}_{\mathcal{C}},R^1\epsilon_{*}\mathbb{G}_m)  \ar[d]^-{h\mapsto h\circ i}  \\
  0 \ar[r] & \Ext^1_{fppf}(\mathcal{G}^D_{\mathcal{C}},\mathbb{G}_{m,\mathcal{C}})  \ar[r]_-{\alpha} & \Ext^1_{klf}(\mathcal{G}^D_{\mathcal{C}},\mathbb{G}_{m,\mathcal{C}}) \ar[r]_-{\beta} & \Hom_{\mathcal{C}}(\mathcal{G}^D_{\mathcal{C}},R^1\epsilon_{*}\mathbb{G}_m).
}
$$

The map $\beta$ associates to $E_Y^{log}$ some morphism $\mathcal{G}^D_{\mathcal{C}} \to R^1\epsilon_{*}\mathbb{G}_m$. The right part of the diagram being commutative, we have:
\begin{equation}\label{crit2}
\gamma (\mathcal{P}^{log}) \circ i = (\beta \circ (i \times id)^{*})(\mathcal{P}^{log}): \mathcal{G}^D_{\mathcal{C}} \to R^1\epsilon_{*}\mathbb{G}_m.
\end{equation}

\begin{cor} \label{THM_v2}
Let $Y \to C$ be a pointed fppf $G$-torsor. Assume that $h_Y$ extends into an $R$-morphism $i: \mathcal{G}^D \to \mathcal{J}$, where $\mathcal{G}$ is a model of $G$, so that we have a Néron-log extension of the torsor.  Let $\gamma  (\mathcal{P}^{log}) \circ i$ be the morphism associated to the torsor $Y$ defined in ($\ref{crit2}$). Then this extended log torsor comes from an fppf one if and only if $\gamma  (\mathcal{P}^{log}) \circ i=0$. In other words, the obstruction to the extension into an fppf torsor is equal to $\gamma  (\mathcal{P}^{log}) \circ i$. \\

\end{cor}

\begin{prop}
If $\Phi:=\mathcal{J}_k/\mathcal{J}_k^0$ and if $D_1,\dots, D_r$ are the irreducible components of the special fiber $\mathcal{C}_k$ so that $
\mathcal{C}_k=\sum_{i=1}^r n_iD_i$ ,
where the $n_i$ are positive integers, then we can identify $\gamma(\mathcal{P}^{log})$ to a morphism 
$$ \gamma_{\Phi}(\mathcal{P}^{log}):  \Phi  \to  \bigoplus_{i=1}^r (\Q/\Z).D_i $$
which is exactly the obstruction for $\mathcal{P}_K$ to extend into an fppf extension of $\mathcal{J}_{\mathcal{C}}$ by $\mathbb{G}_{m,\mathcal{C}}$ (cf. \cite[Exposé VIII ; remark 7.2]{Groth1}).\\

\end{prop}

\begin{proof}
 $\gamma(\mathcal{P}^{\rm log})$ is a morphism of sheaves for the fppf topology on $\mathcal{C}$. It follows from \cite[Theorem 2.2.2]{Gill1} that the restriction of the sheaf $R^1\epsilon_{*}\mathbb{G}_m$ to the smooth site of $\mathcal{C}$ can be described as follows:
$$
(R^1\epsilon_{*}\mathbb{G}_m)|_{\mathrm{smooth}/\mathcal{C}} \simeq \bigoplus_{i=1}^r (\Q/\Z).D_i
$$
where $(\Q/\Z).D_i$ denotes the skyscraper sheaf supported by $D_i\subset {\mathcal{C}}$, with value $\Q/\Z$. Because $\mathcal{J}_{\mathcal{C}}$ is a smooth group scheme over $\mathcal{C}$, we have
$$
\Hom_{\mathcal{C}}(\mathcal{J}_{\mathcal{C}}, R^1\epsilon_{*}\mathbb{G}_m) = \Hom_{\mathcal{C}}(\mathcal{J}_{\mathcal{C}}, \bigoplus_{i=1}^r (\Q/\Z).D_i) = \bigoplus_{i=1}^r \Hom_{D_i}(\mathcal{J}_{D_i}, \Q/\Z)
$$
in which the second equality follows from the adjunction formula.  \\

\noindent We now observe that, $D_i$ being an irreducible vertical component of $\mathcal{C}$, we have 
$$
\Hom_{D_i}(\mathcal{J}_{D_i}, \Q/\Z) = \Hom_k(\mathcal{J}_k, \Q/\Z)= \Hom_k(\Phi, \Q/\Z) 
$$
where the last equality follows from the fact that $\Hom(\mathcal{J}_k^0,\Q/\Z)=0$.\\

\noindent Going through these isomorphisms, $\gamma(\mathcal{P}^{\rm log}): \mathcal{J}_{\mathcal{C}} \to R^1\epsilon_{*}\mathbb{G}_m$ can be identified with a morphism
$$
\gamma_\Phi(\mathcal{P}^{\rm log}): \Phi \to \bigoplus_{i=1}^r (\Q/\Z).D_i.
$$
For the fact that this is the obstruction for the Poincaré extension to extend into an fppf extension, it is proved in \cite[Theorem 4.1.5(i)]{Gill1}. \\

\end{proof}

\section{A finiteness criterion for $\mathcal{J}[p]$}
\label{section3}
Let $C$ be a smooth and geometrically connected $K$-curve with Jacobian $J$. We assume here that $k$ is algebraically closed. This section is devoted to the generalization of a result of Chiodo \cite[Propositions 7.4.1 and 7.5.1]{Chiodo} which provides a necessary and sufficient condition for $J[r]$, the $r$-torsion subgroup of $J$, to admit a finite \'etale model over $R$, when $r$ is prime to $p$. Such a model is the N\'eron model of $J[r]$, hence is equal to $\mathcal{J}[r]$.

Here, we will treat the case where $p$ divides $r$. In this case, a finite model of $J[r]$ is not necessarily \'etale, hence it is not necessarily the N\'eron model of $J[r]$. But one may still ask for a condition for $\mathcal{J}[r]$ to be finite and flat over $R$. We will see that the result of Chiodo generalizes to this case.

Once this is done, given a finite $K$-group scheme $G$ killed by $r$, we combine this finiteness criterion for $\mathcal{J}[r]$ with Theorem~\ref{thm prolong log} in order to prove  the existence of models of $G$ and extension of $G$-torsors over a regular model of $C$.\\

We start this section with some preliminaries about semistability. For the terminology used about graph theory, we refer to  \cite[\S 2.1 and \S 2.2]{Chiodo}.

A proper curve of genus $\geq 2$ over an algebraically closed field $k$ is \emph{semistable} if it is reduced, connected, has only ordinary double points (i.e. its singularities are nodal), and its irreducible components that are isomorphic to $\mathbb{P}^1_k$ meet the other components in at least two points \cite[Definition 1.1;1.2]{DSCH}.  \\

A proper flat curve $\mathcal{C} \to \Spec(R)$ is \emph{semistable} if its geometric fibers are semistable curves. In particular, given a smooth curve $C$ over $K$, a semistable curve $\mathcal{C} \to \Spec(R)$ with a specified isomorphism $\mathcal{C} \times_{\Spec(R)} \Spec(K) \simeq C$ is called a semistable model of $C$ over $R$ and, by abuse of terminology, one says that $C$ is a semistable curve over $K$.

An abelian variety $A$ over $K$ has semistable reduction over $R$ if the connected component of the special fiber of its N\'eron model $\mathcal{A}_k$ has unipotent rank $0$, which means that $\mathcal{A}_k^0$ is an extension of an abelian variety by a torus. 

This terminology is justified by the following fact: given a smooth, geometrically connected curve $C$ over $K$ of genus $g \geq 2$, the following conditions are equivalent:
\begin{itemize}
\item $C$ has semistable reduction over $R$;
\item the minimal regular model of $C$ over $R$ is semistable;
\item the Jacobian variety of $C$ has semistable reduction over $R$.
\end{itemize}

See \cite[Expos\'e~1, Proposition 2.2]{DSCH} for the equivalence of the first two conditions, and \cite[Expos\'e~1, Proposition 5.7]{DSCH} for the equivalence with the last one.

From now on, let $C$ denote a smooth geometrically connected $K$-curve of genus $g \geq 2$, which has semistable reduction over $R$. Let $\mathcal{C}$ be the minimal regular model of $C$ over $R$. We denote by $\Gamma$ the dual graph of the special fiber $\mathcal{C}_k$: it is the graph whose set of vertices is the set of irreducible components of $\mathcal{C}_k$, and whose set of edges is the set of nodes of $\mathcal{C}_k$; we denote by $b_1(\Gamma)$ the first Betti number of $\Gamma$.

As previously, $J$ denotes the Jacobian of $C$ and $\mathcal{J}$ denotes its N\'eron model over $R$. We denote by $\Phi$ the group of components of the special fiber $\mathcal{J}_k$ of $\mathcal{J}$.

\begin{defi}
In a graph, a \emph{circuit} is a path that begins and ends at the same vertex. 
A circuit that does not repeat vertices is called a \emph{cycle}. 

\noindent If $\Omega$ is a graph, we let $\Cyc(\Omega)$ denote the set of cycles in $\Omega$, and define
$$
c_2(\Omega):=\gcd\{\text{common number of edges of $C$ and $C'$}~|~ C,C'\in \Cyc(\Omega)\}.
$$
\noindent If $\Cyc(\Omega)= \varnothing $, we put $c_2(\Omega)=0$.
\end{defi}

\begin{exam}
If $\Omega$ is a polygon with $d$ edges, then $c_2(\Omega)=d$. If $\Omega$ consists of two polygons with $d$ edges each, sharing $d'$ edges, then $c_2(\Omega)=\gcd(d,d')$. Other examples are given in \cite[\S{}5.6]{Chiodo}.
\end{exam}

With the hypotheses as stated above, the following statement was proved by Chiodo \cite[Proposition 7.5.1]{Chiodo}, using a slightly different terminology:

\begin{thm}(Chiodo's criterion)\label{Chiodo's criterion}.
\label{prop:CC}
We have
$$
\Phi[r] \simeq (\Z/r\Z)^{b_1(\Gamma)}
$$
if and only if $r$ divides $c_2(\Gamma)$.
\end{thm}

The main step in generalizing Chiodo's result is the following:

\begin{thm}
\label{prop:phib1}
Let $r>1$ be an integer. The group scheme $\mathcal{J}[r]$ is finite and flat if and only if the curve $C$ has semistable reduction and the dual graph $\Gamma$ of its special fiber satisfies the condition
$$
\Phi[r] \simeq (\Z/r\Z)^{b_1(\Gamma)}.
$$ 
\end{thm}

\begin{proof}
The case where $r$ is prime to $p$ has been proved by Chiodo. Therefore, we may (and do) assume that $p$ divides $r$.

Firstly, we observe that if $C$ has not semistable reduction, then its Jacobian has not  semistable reduction either. Hence the unipotent rank of $\mathcal{J}_k$ is not $0$, which implies that $\mathcal{J}_{k}$ contains the additive group $\mathbb{G}_a$ as a subgroup. But $\mathbb{G}_{a,k}[p] = \mathbb{G}_{a,k}$ since $p$ is equal to the characteristic of $k$, hence $\mathcal{J}_{k}[r]$ is not a finite group scheme since it contains a one-dimensional subgroup. Therefore, the fact that $C$ has a semistable reduction is a necessary condition for $\mathcal{J}[r]$ to be finite.

We now assume that the minimal regular model $\mathcal{C}$ of $C$ is semistable.
Then, since $J$ has semistable reduction, $\mathcal{J}[r]$ is quasi-finite and flat (see \cite[\S 7.3, Lemma~2]{BLR}). But  $\mathcal{J}[r]$ is finite if and only if its generic and special fibers have the same rank, i.e. $\mathcal{J}_k[r]$ has rank $r^{2g}$ as a finite $k$-scheme. On the other hand, by the snake Lemma, we have a long exact sequence:
$$
0 \to \mathcal{J}_k^0[r] \to \mathcal{J}_k[r] \to \Phi[r] \to \mathcal{J}_k^0/r \to \cdots
$$

Since $J$ is semistable, the multiplication map $\mathcal{J}_k^0 \xrightarrow{~r~} \mathcal{J}_k^0$ is surjective \cite[\S 7.3, Lemma~1]{BLR}, hence $\mathcal{J}_k^0/r=0$. Therefore, we have
\begin{equation}
\label{eq:rank}
\rank \mathcal{J}_k[r] = \rank \mathcal{J}_k^0[r] \times \rank \Phi[r].
\end{equation}

The semistability of $C$ implies that $\Pic^0_{\mathcal{C}} \simeq \mathcal{J}^0$  \cite[\S 9.5, Theorem~4]{BLR}, and that $\Pic^0_{\mathcal{C}_{k}}$ is extension of an abelian variety of rank $a$ by a torus of rank $b_1({\Gamma})$ \cite[\S9.2, Example~8]{BLR}. We have
$$
a+b_1({\Gamma}) = \dim_k H^1(\mathcal{C}_k, \mathcal{O}_{\mathcal{C}_k})= \dim_K H^1(C,\mathcal{O}_C)=g
$$
where the first equality follows from \cite[\S 7.5, Definition 5.21]{Liu}, and the second one, given the flatness and projectivity of $\mathcal{C}$ over $R$, follows from \cite[\S 8.3, Corollary~8.3.6]{Liu}. 

Finally, it is well-known that the $r$-torsion subgroup of an abelian variety (resp. a torus) of dimension $d$ is a finite group scheme of rank $r^{2d}$ (resp. $r^d$). Putting everything together, we deduce that
$$
\rank\Pic^0_{\mathcal{C}_{k}}[r] = r^{2a}\times r^{b_1({\Gamma})}=r^{2g-b_{1}(\Gamma)}
$$
as a finite $k$-group scheme. Hence, it follows from \eqref{eq:rank} that $\mathcal{J}_k[r]$ has rank $r^{2g}$ if and only if $\Phi[r]$ has rank $r^{b_1(\Gamma)}$ as a finite $k$-group scheme. Since $\Phi[r]$ is \'etale, this means that $\Phi[r]$ has exactly $r^{b_1(\Gamma)}$ points over $k$. By \cite[Lemma 7.3.4]{Chiodo}, this is equivalent to $\Phi_{k}[r] \simeq (\Z/r\Z)^{b_1(\Gamma)}$.

\end{proof}

\begin{cor}
\label{cor Chiodo}
Let $r>1$ be an integer. The group scheme $\mathcal{J}[r]$ is finite flat if and only if the curve $C$ has semistable reduction and the dual graph $\Gamma$ of the special fiber of $\mathcal{C}$ satisfies Chiodo's criterion.
\end{cor}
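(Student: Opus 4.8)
The plan is to obtain this corollary as an immediate consequence of the two preceding results: Proposition~\ref{prop:phib1}, which characterizes the finite-flatness of $\mathcal{J}[r]$ by a group-theoretic condition on the component group $\Phi_k$, and Chiodo's criterion (Proposition~\ref{prop:CC}), which translates that condition into an arithmetic divisibility. There is no genuinely new content to produce here; all the substance lies in Proposition~\ref{prop:phib1}, whose proof extends Chiodo's original statement to the case where $p$ divides $r$. The corollary is simply the repackaging of that equivalence in terms of the computable invariant $c_2(\Gamma)$.

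Concretely, I would first invoke Proposition~\ref{prop:phib1}: the group scheme $\mathcal{J}[r]$ is finite and flat if and only if $C$ has semistable reduction and the dual graph $\Gamma$ of the special fiber of $\mathcal{C}$ satisfies $\Phi_k[r]\simeq(\Z/r\Z)^{b_1(\Gamma)}$. It then remains only to identify this last condition with the assertion that $\Gamma$ satisfies Chiodo's criterion. Under the standing hypothesis of semistable reduction, Proposition~\ref{prop:CC} states precisely that $\Phi_k[r]\simeq(\Z/r\Z)^{b_1(\Gamma)}$ holds if and only if $r$ divides $c_2(\Gamma)$, which is exactly what we mean by $\Gamma$ satisfying Chiodo's criterion. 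Substituting this equivalence into the biconditional of Proposition~\ref{prop:phib1} yields the statement.

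The only point requiring a little care, and the closest thing to an obstacle, is the bookkeeping of the semistability hypothesis, which occurs as a conjunct on both sides of the asserted equivalence. When $C$ fails to have semistable reduction, both sides are automatically false: the left-hand side because semistability is a necessary condition for finite-flatness by Proposition~\ref{prop:phib1}, and the right-hand side because it explicitly demands semistable reduction. When $C$ does have semistable reduction, the special fiber is nodal, so $\Gamma$ and $c_2(\Gamma)$ are defined in exactly the setting where Proposition~\ref{prop:CC} applies, and the two conditions coincide as above. Hence the equivalence holds in every case, and the corollary is otherwise a purely formal combination of the two cited propositions.
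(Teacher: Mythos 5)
Your proposal is correct and follows exactly the paper's own proof, which simply combines Proposition~\ref{prop:phib1} with Proposition~\ref{prop:CC}. The extra bookkeeping you do about the semistability hypothesis appearing on both sides is sound and left implicit in the paper.
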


\begin{cor} \label{coro}
Let $C$ be a smooth projective  geometrically connected curve over $K$ of genus $g \geq 2$, with a $K$-rational point. Let $G$ be a commutative finite  flat  $K$-group scheme killed by $r$, and let $Y \rightarrow C$ be a pointed  fppf $G$-torsor such that $Y$ is geometrically connected. If $\mathcal{C}$ has semistable reduction and if Chiodo's criterion is satisfied, then $G$ has an $R$-model $\mathcal{G}$ and $Y \rightarrow C$ extends uniquely into a pointed logarithmic $\mathcal{G}$-torsor over any regular model of $C$.
\end{cor}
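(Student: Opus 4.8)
The plan is to reduce the statement to the criterion of Corollary~\ref{G^D -> J}: it suffices to produce a finite flat $R$-model $\mathcal{G}$ of $G$ together with a morphism $\mathcal{G}^D \to \mathcal{J}$ extending the morphism $\phi\colon G^D \to J$ attached to the pointed torsor $Y \to C$ by the isomorphism~(\ref{pointed-GJ}). Since $\mathcal{J}$ is the N\'eron model of $J$ and therefore intrinsic to $C$ (independent of the chosen regular model), exhibiting such a morphism will simultaneously yield the extension over \emph{every} regular model of $C$, which accounts for the ``any regular model'' clause.

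First I would gather the two main inputs. On the one hand, since $\mathcal{C}$ is semistable---equivalently, $C$ has semistable reduction---and Chiodo's criterion (Proposition~\ref{Chiodo's criterion}) holds, Corollary~\ref{cor Chiodo} tells us that $\mathcal{J}[r]$ is a finite flat $R$-group scheme with generic fiber $J[r]$. On the other hand, because $G$, hence its Cartier dual $G^D$, is killed by $r$, the morphism $\phi$ factors through $J[r]=(\mathcal{J}[r])_K$; and the hypothesis that $Y$ is geometrically connected should translate into $\phi$ having trivial kernel, i.e.\ being a monomorphism, hence (as $G^D$ is finite over the field $K$) a closed immersion. Thus $G^D$ is realized as a closed $K$-subgroup scheme of the generic fiber of the finite flat $R$-scheme $\mathcal{J}[r]$.

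I would then take $\mathcal{G}^D$ to be the scheme-theoretic closure of $G^D$ inside $\mathcal{J}[r]$. Over the discrete valuation ring $R$ this closure is automatically flat (being torsion-free), and as a closed subscheme of the finite $R$-scheme $\mathcal{J}[r]$ it is finite; a routine density argument shows that it inherits the structure of a subgroup scheme. Its Cartier dual $\mathcal{G}:=(\mathcal{G}^D)^D$ is then a finite flat commutative $R$-group scheme whose generic fiber is $(G^D)^D=G$, that is, a model of $G$ as required. The composite $\mathcal{G}^D \hookrightarrow \mathcal{J}[r] \hookrightarrow \mathcal{J}$ extends $\phi$, so Corollary~\ref{G^D -> J} (equivalently Theorem~\ref{thm prolong log}) applies and produces the unique pointed logarithmic $\mathcal{G}$-torsor over $\mathcal{C}$ extending $Y$; by the remark above, the same conclusion holds over any regular model.

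The step I expect to be the main obstacle is the translation of the geometric connectedness of $Y$ into the injectivity of $\phi$, since it is precisely this injectivity that allows the scheme-theoretic closure to serve \emph{directly} as the model $\mathcal{G}^D$. Were $\phi$ merely a homomorphism with nontrivial kernel, one would additionally have to construct and extend a model of that kernel, a complication that the connectedness assumption removes. The remaining points---flatness and finiteness of the closure, its being a subgroup scheme, and the compatibility of Cartier duality with passage to the generic fiber---are routine once $\mathcal{J}[r]$ is known to be finite flat.
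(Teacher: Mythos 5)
Your proposal is correct and follows essentially the same route as the paper's proof: factor $G^D \to J$ through $J[r]$ using that $G$ is killed by $r$, use geometric connectedness of $Y$ to get injectivity, take the schematic closure of $G^D$ in the finite flat group scheme $\mathcal{J}[r]$ supplied by Corollary~\ref{cor Chiodo}, dualize to obtain $\mathcal{G}$, and conclude via Theorem~\ref{thm prolong log}. The points you flag as needing care (flatness and group structure of the closure, injectivity from connectedness) are exactly the ones the paper handles, citing EGA IV for the flatness and compatibility of schematic closure with products.
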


\begin{proof}
Since $G$ is killed by $r$, so is $G^D$, hence the associated morphism $h_Y$ factors through $J[r]$. In addition, given that  $Y$ is geometrically connected, $h_Y$ is a closed immersion (cf. \cite[VI, \S 11, Proposition 10]{Serre}); hence, $G^D$ can be seen as a subscheme of $J[r]$ so that we can consider $\overline{G^D}$, the schematic closure of $G^D$ inside $\mathcal{J}[r]$. According to \cite[Proposition 2.8.5]{Groth4}, it is the unique closed subscheme of $\mathcal{J}[r]$ that is flat over $\Spec(R)$ and whose generic fiber is $G^D$. Furthermore, since taking the schematic closure commutes with fibered products over $\Spec(R)$ (\cite[Corollary 2.8.6]{Groth4}), the multiplication over $G^D$ extends naturally into $\overline{G^D}$. So $\overline{G^D}$ is a closed and flat subgroup scheme of $\mathcal{J}[r]$.\\
Denote by $\mathcal{C}$ the minimal regular model of $C$ over $R$. Now, by assumption and Corollary \ref{cor Chiodo}, the $R$-group scheme $\mathcal{J}[r]$ is finite, hence so is $\overline{G^D}$. By biduality, the Cartier dual $\mathcal{G}$ of $\overline{G^D}$ comes with a closed immersion $\mathcal{G}^D \to \mathcal{J}[r]$ extending $G^D \to J[r]$. Whence the corollary by Theorem~\ref{thm prolong log}.

\end{proof}

\section{An example of extension of torsors over a hyperelliptic curve}\label{section4}

In this section, we will study an example of extension of torsors on a given curve to illustrate the results above. For some rational number $c \notin \{  \pm 1,0 \}$ and a prime $p \neq 2$, we consider the hyperelliptic curve $C$ defined over $\mathbb{Q}$ by:
$$ y^2=f(x)=x^{2p}-(1+c^2)x^p+c^2 $$

\noindent (the example is taken from \cite{GL}). Note that $Q=(1,0)$ is a rational point over this curve, so that we can consider pointed torsors over $C$ relative to $Q$.  \\

\subsection{A hyperelliptic curve whose Jacobian contains a subgroup isomorphic to $(\Z/p\Z)^2$} \label{jacobian contains a subgroup}
What follows comes from \cite[Lemma 3.3]{GL}. Given that in the projective space, the curve $C$ has a singularity at one of the two points at infinity, one checks that we have a smooth projective model of the curve over $\Q$. It is covered by the following two affine charts: 

\begin{itemize}
\item  $y^2=f(x)=x^{2p}-(1+c^2)x^p+c^2$ \hop
\item $t^2=s^{2p}f(\frac{1}{s})$
\end{itemize} \hop

\noindent that glue together using $(x,y)=(\frac{1}{s},\frac{t}{s^p})$. We also denote indifferently this model by $C$.

\noindent Using the Jacobian criterion, we can check that the curve has bad reduction over the primes $2$, $p$ and the primes that divide $c(c^2-1)$.\\
The $p$-torsion in the Jacobian of $C$ comes from the relations:
 \begin{equation}\label{divis}
   (y-x^p-c)(y+x^p+c)=-(1+c)^2x^p 
 \end{equation}
$$(y-x^p+c)(y+x^p-c)=-(1-c)^2x^p.  $$
We compute that 
$$ \mathrm{div}(y-x^p-c)=p(0,c)-p\infty$$
$$  \mathrm{div}(y-x^p+c)=p(0,-c)-p\infty$$
where $\infty$ denotes one of the two points at infinity on $C$. Hence, the two divisors 
\begin{center}
$(0,c)-\infty$ and $(0,-c) -\infty$
\end{center}

\noindent define two classes of order $p$ in the Jacobian of $C$ that are independent classes of divisors by Riemann-Roch theorem. So the Jacobian of the curve $C$ contains a subgroup isomorphic to $(\Z/p\Z)^2$. Therefore, we have a pointed $\mu_p^2$-torsor over $C$.

\subsection{Construction of a regular model of $C$ and study of extension of pointed torsors over it}
In this section, we restrict to the case where $p=c=3$.\\
Let us consider the arithmetic surface $\mathcal{C}/\Z$ with generic fiber $C$, given in affine charts by: 
\begin{itemize}
\item  $y^2=f(x)=x^{6}-10x^3+3^2$ \hop
\item $t^2=s^{6}f(\frac{1}{s}).$
\end{itemize} \hop

\begin{itemize}
\item \textbf{Construction of a regular model $\mathcal{C}_3$ over $\Z_3$}.
Using the Jacobian criterion, we check that in each affine chart, there are two singular points in the fiber over the prime $3$, and in fact, they belong to the intersection of the two charts. So, it suffices to desingularize in only one of the two charts; for example, the first one. \\
In the first chart, the Jacobian matrix at a point $(x,y)$ of  $\mathcal{C}$ is given by:
\begin{center}
$J(x,y)=(-6x^{5}+30x^{2} ~~~~2y)\equiv (0~~~~2y)\mod~3$.
\end{center}
Therefore, the eventual singularities are the solutions of the system:

$$
\left\{
	\begin{aligned}
	& 2y \equiv 0 \mod~3\\	
	& y^2=(x^3-1)x^3 \equiv (x-1)x \mod~3.
	\end{aligned}
\right.
$$

This gives two points that are eventually singular: $\mathfrak{M}_1:=<x-1,y,3>$ and\\ $\mathfrak{M}_2:=<x,y,3>$. While the Krull dimension of the local ring $(\Z[X,Y]/(Y^2-f(X)))_{\mathfrak{M}_1}$ is $2$, we compute that $\mathrm{dim}_{\mathbb{F}_3} \mathfrak{M}_1/ \mathfrak{M}_1^2=3$. Thus, $\mathfrak{M}_1$ is not a regular point and the same holds for $\mathfrak{M}_2$. Next, we will blow up separately $\mathcal{C}$ in each of the closed points $\mathfrak{M}_1$ and $\mathfrak{M}_2$.\hop
We view our chart of $\mathcal{C}$ inside $\mathbb{A}^2_{\Z}$. Then, the blow-up of $\mathcal{C}$ at $\mathfrak{M}_1$ is formed by taking the following three schemes and gluing them together:

\textbf{Chart 1: $\mathcal{C}_{3,1}^1$}. Define new variables $y=(x-1)v$ and $3=(x-1)w$. Let us write $f(x)= \sum_{i=0}^{6} a_i(x-1)^i=\sum_{i=2}^{6} a_i(x-1)^i-8.3(x-1)$ since $a_0=0$ and $a_1=-3.8$. After replacing $y$ and $3$ by the new variables in the equation of the first chart, we get that $\mathcal{C}_{3,1}^1$ is given by the system:

$$ \mathcal{C}_{3,1}^1 
\left\{
	\begin{aligned}
	& v^2= \sum_{i=2}^{6} a_{i}(x-1)^{i-2}-8w \\	
	& (x-1)w=3. \\
	\end{aligned} 
\right.
$$

\textbf{Chart 2: $\mathcal{C}_{3,1}^2$}. The second chart is formed using the new variables $x-1=yu$ and $3=yt$. Replacing in the equation of $\mathcal{C}$ gives: 

$$ \mathcal{C}_{3,1}^2 
\left\{
	\begin{aligned}
	& 1=-8tu+\sum_{i=2}^{6}a_{i}y^{i-2}u^{i} \\	
	& yt=3. \\
	\end{aligned}
\right.
$$

\textbf{Chart 3: $\mathcal{C}_{3,1}^3$}. The third chart is formed using the new variables $x-1=3f$ and $y=3h$. We get:

$$ \mathcal{C}_{3,1}^3: 
	 h^2=-8f+\sum_{i=2}^{6} a_{i}3^{i-2}f^{i}. $$

Then, we glue the four charts together (the three charts defined above and the chart given by $t^2=s^{6}f(\frac{1}{s})$) using the change of variables defined above, and this gives an arithmetic surface over $\Z$ with generic fiber isomorphic to the curve $C$. We claim that this surface is no longer singular over $\mathfrak{M}_1$. Indeed, we just have to check the regularity over $\mathfrak{M}_1$ in one of the three affine charts defined above. For instance, in the first chart $\mathcal{C}_{3,1}^1$, the Jacobian matrix at a point $(x,v,w)$ is given by:

\begin{center}
 $J(x,v,w) = \begin{pmatrix}- \sum_{i=3}^{6}a_{i}(i-2)(x-1)^{i-3} & 2v & -8 \\ w & 0 & x-1 \end{pmatrix}.$
\end{center}

Since we look for singularities after the blow-up at $\mathfrak{M}_1$, we focus on points that verify $x=1$, $y=0$ and $3=0$. We get:

\begin{center}
 $J(1,v,w) = \begin{pmatrix}-a_3 & 2v & -8\\ w & 0 & 0 \end{pmatrix}.$
\end{center}
Hence we get the system  (everything $\mod~3$):

$$ \left\{
	\begin{aligned}
	& v^2 \equiv a_2-8w \\	
	& 2vw \equiv 0\\
	& 8w \equiv 0. \\
	\end{aligned}
 \right.$$

Thus, $w \equiv 0$. The first equation then gives $v^2-a_2 \equiv 0$; an easy computation of $a_2$ gives that $a_2 \equiv 0 \mod~3$ and thus $v \equiv 0$. Therefore, $\mathfrak{M}_1':=<x-1,v,w,3>$ is the only possible singularity and we can check that in fact, this point is regular. Indeed, we need to see that $3$ and $w$ live in $\mathfrak{M}_1'^2$. For $3$, it is clear since $3=(x-1)w$. As for $w$:

$$0 \equiv v^2- \sum_{i=2}^{6}a_{i-2}(x-1)^i+8w \equiv a_2+ a_3(x-1)+8w \mod \mathfrak{M}_1'^2.$$

Since $a_2$ and $a_3$ are both multiples of $3$ (easy to check), it follows that $w \in \mathfrak{M}_1'^2$. Therefore, $\mathfrak{M}_1'$ is regular. Finally, we conclude that the surface is no longer singular over $\mathfrak{M}_1$. We base change to $\Spec(\Z_3)$ and we get a model of the curve $C$ over $\Z_3$; let us denote it by $\mathcal{C}_{3,1}$. \hop \hop
We do exactly the same work at the other singular point of $\mathcal{C}$: the blow-up of $\mathcal{C}$ at $\mathfrak{M}_2$ is formed by taking the following three schemes and gluing them together, using the suitable change of variables. \\

\textbf{Chart 1: $\mathcal{C}_{3,2}^1$}. We define new variables $y=x\alpha$ and $3=x\beta$. We get

$$ \mathcal{C}_{3,2}^1 
\left\{
	\begin{aligned}
	& \alpha^2= x^4-10x+\beta^2 \\	
	& x\beta=3. \\
	\end{aligned}
\right.$$

\textbf{Chart 2: $\mathcal{C}_{3,2}^2$}. We define new variables $x=y\gamma$ and $3=y\delta$:

$$ \mathcal{C}_{3,2}^2 
\left\{
	\begin{aligned}
	& 1=\gamma^6y^4-10\gamma^3y+\delta^2 \\	
	& y\delta=3. \\
	\end{aligned}
\right.$$
	
	\textbf{Chart 3: $\mathcal{C}_{3,2}^3$}. We define new variables $x=3\eta$ and $y=3\mu$:

$$ \mathcal{C}_{3,2}^3: \mu^2= 3^4\eta^6 -10.3\eta^3+1.$$

We glue the four charts together using the change of variables and this gives a model of $C$ over $\Z$. Once again, we can check that this model is no longer singular over $\mathfrak{M}_2$. For instance, in the first affine chart $\mathcal{C}_{3,2}^1$, the Jacobian matrix at a point $(x, \alpha,\beta)$ is given by 
\begin{center}
 $J(x,\alpha,\beta) = \begin{pmatrix} -4x^3+10 & 2\alpha & -2\beta \\ \beta & 0 & x \end{pmatrix}.$
\end{center}
When $x=0, y=0$ and $3=0$, the matrix becomes
$$J(0,\alpha,\beta) = \begin{pmatrix} 10 & 2\alpha & -2\beta \\ \beta & 0 & 0 \end{pmatrix}.$$
Hence we get the system  (everything $\mod~3$):
$$ \left\{
	\begin{aligned}
	& \alpha^2 \equiv \beta^2 \\	
	& \beta^2 \equiv 0\\
	& 2\alpha\beta \equiv 0. \\
	\end{aligned}
 \right.$$
Therefore, $\alpha \equiv \beta \equiv 0$, which means that there is only one eventual singular point which is $\mathfrak{M}_2':=<x,y,\alpha,\beta,3>$. But $3=x\beta \in \mathfrak{M}_2'^2$, $y=x\alpha \in \mathfrak{M}_2'^2$ and from $\alpha^2 \equiv x^4-10x+\beta^2$, we deduce that $x \in \mathfrak{M}_2'^2$. Hence $\mathfrak{M}_2'$ is regular.\\
We denote by $\mathcal{C}_{3,2}$ the model over $\Z_3$ obtained by doing the base change to $\Z_3$ of the gluing of the four charts above. Finally, we denote by $\mathcal{C}_{3}$ the gluing of $\mathcal{C}_{3,1}$ and $\mathcal{C}_{3,2}$. By the work done before, $\mathcal{C}_{3}$ is a \textit{regular} model of $C$ over $\Z_3$.

\item \textbf{Computation of the special fiber of $\mathcal{C}_3$}.\hop 
In this section, we compute the special fiber of the regular model $\mathcal{C}_3$. One can check that after gluing together all the charts that cover $\mathcal{C}_3$, the special fiber of $\mathcal{C}_3$, that we denote by $\tilde{\mathcal{C}}_3$, lies over the charts $\mathcal{C}_{3,1}^1$ and $\mathcal{C}_{3,2}^1$, except for some points at infinity. \hop
There are two irreducible components of the special fiber that lie inside $\mathcal{C}_{3,1}^1$; they  intersect in one point ($x \equiv 1,v \equiv 0,w \equiv 0 \mod~3$):

$$
F_{3,1}^1 \left\{
	\begin{aligned}
	& x \equiv 1 \\	
	& v^2 \equiv w. \\
	\end{aligned}\right. ~~~~~
	\nolinebreak
 F_{3,1}^2 \left\{
	\begin{aligned}
	& w \equiv 0 \\	
	& v^2 \equiv x^4-x.  \\
	\end{aligned}\right.
$$

There are three irreducible components of the special fiber that lie inside $\mathcal{C}_{3,2}^1$ and they intersect in one point ($x \equiv 0, \alpha \equiv 0, \beta \equiv 0 \mod~3$):

$$
F_{3,2}^1 \left\{
	\begin{aligned}
	& x \equiv 0 \\	
	& \alpha \equiv \beta. \\
	\end{aligned}\right. ~~~~~
	\nolinebreak
	F_{3,2}^2 \left\{
	\begin{aligned}
	& x \equiv 0 \\	
	& \alpha \equiv -\beta. \\
	\end{aligned}\right. ~~~~~
	\nolinebreak
 F_{3,2}^3 \left\{
	\begin{aligned}
	& \beta  \equiv 0 \\	
	& \alpha^2 =x^4-x.  \\
	\end{aligned}\right.
$$

Notice that $F_{3,1}^1$ (resp. each of $F_{3,2}^1$ and $F_{3,2}^2$) is contained in the exceptional divisor produced after the first (resp. second) blow-up, while $F_{3,1}^2$ (resp. $F_{3,2}^3$) meets the exceptional divisor in only one point. Thus, after gluing together $\mathcal{C}_{3,1}$ and $\mathcal{C}_{3,2}$, the components $F_{3,1}^2$ and $F_{3,2}^3$ become identified together since blowing up did not make any change in points where we did not blow up. Therefore, $\tilde{\mathcal{C}}_{3}$ is composed of four components (see figure \ref{fig:fsp} below).

\begin{figure}[h!]

\centering
\includegraphics[scale=0.5]{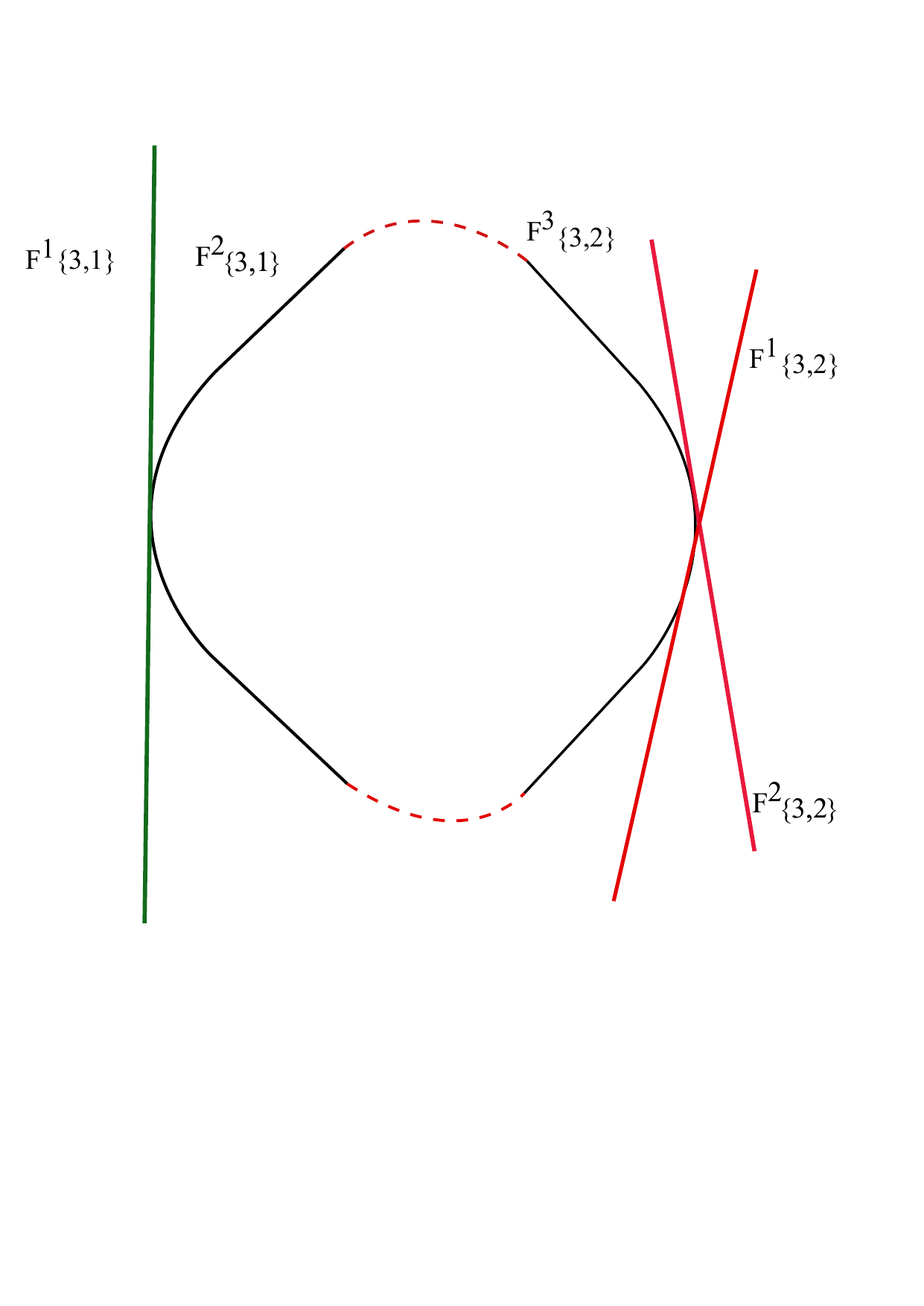} \caption{The special fiber $\tilde{\mathcal{C}}_3$.}
\label{fig:fsp}
\end{figure}

Using \cite[\S 9.6, Theorem 1]{BLR}, one can compute the group of components of the special fiber $\tilde{\mathcal{C}}_3$ and its order is given by the $gcd$ of the minors of the following intersection matix:

\begin{center}
 $\begin{pmatrix}F_{3,1}^1F_{3,1}^1 & F_{3,1}^1F_{3,1}^2 & F_{3,1}^1F_{3,2}^1 &F_{3,1}^1F_{3,2}^2 \\ F_{3,1}^2F_{3,1}^1 & F_{3,1}^2F_{3,1}^2 & F_{3,1}^2F_{3,2}^1 &F_{3,1}^2F_{3,2}^2
 \\  F_{3,2}^1F_{3,1}^1 & F_{3,2}^1F_{3,1}^2 & F_{3,2}^1F_{3,2}^1 &F_{3,2}^1F_{3,2}^2 \\
  F_{3,2}^2F_{3,1}^1 & F_{3,2}^2F_{3,1}^2 & F_{3,2}^2F_{3,2}^1 &F_{3,2}^2F_{3,2}^2
 \end{pmatrix}=\begin{pmatrix}-2 & 2 & 0 & 0 \\ 2 & -4 & 1 & 1
 \\ 0 & 1 & -2 & 1 \\
  0 & 1 & 1 & -2
 \end{pmatrix},$
\end{center}

so it is $6$.

\item \textbf{Study of the extension of the pointed $\mu_3^2$-torsor}.

As shown in subsection \ref{jacobian contains a subgroup}, the Jacobian of the curve $C$ contains a subgroup isomorphic to $(\Z/3 \Z)^2$; thus, we have a pointed $\mu_3^2$-torsor over $C$. Let $\mathcal{J} \to \Z$ be the N\'eron model of $J$ over $\Z$; then, the inclusion $(\Z/3\Z)^2 \subset J$ extends to a morphism $\mathcal{H}:= (\Z/3\Z)^2 \to \mathcal{J}$. Let us now consider $\mathcal{H}_3:= \mathcal{H} \times_{\Z} \Z_3 \to \mathcal{J}_3$ (where $\mathcal{J}_3:= \mathcal{J} \times_{\Z} \Z_3$). By endowing $\mathcal{C}_3$ with the logarithmic structure induced by its special fiber, and by the work done in section \ref{section1}, we deduce that the $\mu_3^2$-pointed torsor over $C$ considered before extends uniquely to a logarithmic $\mu_3^2$-torsor over $\mathcal{C}_3$. The question is, then, whether or not it extends to a pointed fppf-torsor over $\mathcal{C}_3$. According to Proposition \ref{G^D->J^0}, this is equivalent to verify if the image of $\mathcal{H}_3$ lands into $\mathcal{J}^0_3$. This amounts to ask if the two divisors 
\begin{center}
$(0,3) - \infty$ and $(0,-3) - \infty$
\end{center}
extend into sections of $\mathcal{J}^0_3$ over $\Z_3$.\\

On the other hand, we recall that the data of a torsor is equivalent to that of a $ \mathbb{G}_m$-torsor which verifies the theorem of the square (cf. Lemma \ref{lem:SGA7}). Therefore, since the $\mu_3$-torsor over $C$ extends uniquely into a log torsor over $\mathcal{C}_3$, this implies that the $\mathbb{G}_m$-torsors $(0,3) - \infty$ and $(0,-3) - \infty$ extend into $\mathbb{G}_m$-logarithmic torsors (i.e. divisors with rational coefficients).\\

 Another way to say the same thing is: since $\mathcal{C}_3$ is flat, the curves $C$ and $\mathcal{C}_3$ have the same field of functions, hence we can consider two natural divisors over $\mathcal{C}_3$
\begin{center}
$D_+:=\frac{1}{3} \mathrm{div}(y-x^3-3)= \overline{(0,3)}-\overline{\infty}+\frac{1}{3}V_3$ \\
$D_-=\frac{1}{3} \mathrm{div}(y-x^3+3)= \overline{(0,-3)}-\overline{\infty}+\frac{1}{3}V_3'$

\end{center}
where $\overline{(0,3)}$, $\overline{(0,-3)}$ and $\overline{\infty}$ are horizontal divisors corresponding to the sections of $\mathcal{C}_3$ that extend the points of $C$, and $V_3$ and $V_3'$ are two vertical divisors over $\mathcal{C}_3$.\\
 The extended torsor is fppf if and only if the coefficients of $D_+$ and $D_-$ are integral. Indeed, $D_+$ and $D_-$ induce a subgroup of $3$-torsion inside $\Pic^{log}_{\mathcal{C}_3/R}$, hence a morphism $h: (\Z/3\Z)^2 \to \Pic^{log}_{\mathcal{C}_3/R}$ which extends by construction the generic morphism $h_K: (\Z/3\Z)^2 \hookrightarrow J$ considered above. Therefore, this gives by (\ref{PointedlogGPic}) a logarithmic $\mu_3^2$-torsor over $\mathcal{C}_3$ which extends the generic $\mu_3^2$-torsor over $C$.\\
  On the other hand, by the Néron mapping property,  $h_K$ also extends into $(\Z/3\Z)^2 \to \mathcal{J}$, so by unicity of extension of torsors (cf. Corollary \ref{uni}), the morphism $h$ factors through $\mathcal{J}$ and the extended log torsor is a Néron-log torsor. Therefore, according to \S~\ref{obstr}, this extended torsor is fppf if and only if the coefficients appearing in the corresponding divisors are integral. \\
 
We need to compute $\mathrm{div}(y-x^3-3)$ and $\mathrm{div}(y-x^3+3)$ over the special fiber $\tilde{\mathcal{C}}_3$. For instance, over the special fiber, the first function vanishes along the exceptional divisor which is obtained after the second blow-up. Let us compute its multiplicity. For example, in the chart $\mathcal{C}_{3,2}^1$ where $y=x\alpha$ and $3=x\beta$, and where the curve is given by 
$$  
\left\{
	\begin{aligned}
	& \alpha^2= x^4-10x+\beta^2 \\	
	& x\beta=3, \\
	\end{aligned}
\right.$$
we have 
$$\mathrm{div}(y-x^3-3)=\mathrm{div}(x)+\mathrm{div}(\alpha-x^2-\beta). $$
We compute that $\mathrm{div}(x)=\mathrm{div}\big(\frac{(\alpha-\beta)(\alpha+\beta)}{x^3-1}\big)=F_{3,2}^1+F_{3,2}^2$. Now, using the relation (\ref{divis}) and $\mathrm{div}(x)$, we compute that $\mathrm{div}(\alpha-x^2-\beta)=F_{3,2}^1$. Hence, 
\begin{center}
$D_+= \frac{1}{3} \mathrm{div}(y-x^3-3)= \overline{(0,3)}-\overline{\infty}+\frac{1}{3}(2F_{3,2}^1+F_{3,2}^2).$ \\
\end{center}
In the same way, we get 
\begin{center}
$D_-=\frac{1}{3} \mathrm{div}(y-x^3+3)= \overline{(0,-3)}-\overline{\infty}+\frac{1}{3}(2F_{3,2}^2+F_{3,2}^1).$ \\
\end{center}

\textbf{Conclusion}. 
Since the coefficients of $D_+$ and $D_-$ are not integral, we conclude that the pointed $\mu_3^2$-torsor does not extend into a pointed fppf-torsor over $\mathcal{C}_3$. 
Moreover, since there is no extension into an fppf torsor, this means by Remark \ref{premier} that the order of the group of components is not prime to $3$, which is indeed the case here since we computed that this order is $6$.
\end{itemize}

\bibliographystyle{plain}
\bibliography{Mehidi}
\textit{Data sharing not applicable to this article as no datasets were generated or analysed during the
current study.}\\
Sara Mehidi: Institut de Mathématiques de Bordeaux,\\
351, cours de la Libération - F 33 405 TALENCE. \\
Bureau 315, IMB. \\
E-mail address: \texttt{sarah.mehidi@math.u-bordeaux.fr}

\end{document}